\pgfplotsset{compat=1.16}
\newcommand{\A}{\mathcal{A}}
\renewcommand{\S}{\mathcal{S}}
\newcommand{\R}{\mathbb{R}}
\newcommand{\Z}{\mathbb{Z}}
\newcommand{\cl}{Q}  
\newtheorem{theorem}{Theorem}[section]
\newtheorem{lemma}[theorem]{Lemma}
\newtheorem{corollary}[theorem]{Corollary} 
\theoremstyle{definition}
\newtheorem{definition}[theorem]{Definition}
\newtheorem{question}[theorem]{Question} 
\newtheorem{conjecture}[theorem]{Conjecture} 
\theoremstyle{remark}
\newtheorem{remark}[theorem]{Remark}
\numberwithin{equation}{section}
\begin{document}

\title{ Piercing Numbers in Circular Societies }


\author{Kristen Mazur}
\address{Elon University}
\curraddr{}
\email{kmazur@elon.edu}
\thanks{}

\author{Mutiara Sondjaja}
\address{New York University}
\email{sondjaja@nyu.edu}
\thanks{}

\author{Matthew Wright}
\address{St.\ Olaf College}
\curraddr{}
\email{wright5@stolaf.edu}
\thanks{}

\author{Carolyn Yarnall}
\address{CSU Dominguez Hills}
\curraddr{}
\email{cyarnall@csudh.edu}
\thanks{}

\subjclass[2020]{52A35, 60D05, 91B12}

\dedicatory{ }

\keywords{approval voting, piercing number, circular societies, discrete geometry, applied topology, probability}

\date{}

\begin{abstract}
 In the system of approval voting, individuals vote for all candidates they find acceptable. Many approval voting situations can be modeled geometrically, and thus geometric concepts such as the piercing number  have a natural interpretation. In this paper, we explore piercing numbers in the setting where voter preferences can be modeled by congruent arcs on a circle -- i.e., in fixed-length circular societies. Given a number of voters and the length of the voter preference arcs, we give bounds on the possible piercing number of the society. Further, we explore which piercing numbers are more likely. Specifically, under the assumption of uniformly distributed voter preference arcs, we determine the probability distribution of the piercing number of  societies in which the length of the arcs is sufficiently small. We end with simulations that give estimated probabilities of piercing number for societies with larger voter preference arcs. 
 

\end{abstract}

\maketitle


\section{Introduction}
Approval voting is a system in which each individual casts a vote for all candidates of which they approve, and the winning candidate is the one that receives the most votes. This system is of interest in that it has many advantages over the more common plurality voting systems and it can be applied outside of the political arena \cite{BramsFishburn}. For instance, approval voting is often the way that committees approach scheduling problems: each committee member indicates the times they are available, and the meeting is held at the time most members are able to meet. This situation can be modeled mathematically by considering each person's availability as an interval on a line or an arc on a circle, the latter corresponding to a 24-hour clock which may be especially useful for scheduling virtual meetings occurring across several time zones.

Previous research (\cite{BergEtal}, \cite{Hardin}, \cite{MazurEtal}) developed bounds on the maximum number of voters who approve of a single candidate in various approval voting scenarios. For example, if we model the scheduling problem above using intervals on a line, then Berg et al.\ showed that if two out of every three committee members agree on a meeting time then there is a time that works for at least half of the committee members \cite{BergEtal}. Alternatively, if we model the problem using arcs on a circle, then Hardin showed that if two out every three members agree on a meeting time, then there is a time that works for at least one third of the committee members \cite{Hardin}.

This paper explores a different question. Instead of the maximum number of voters who agree on a single candidate, we develop a bound on the number of candidates needed to satisfy all voters.  Specifically, expanding upon the scheduling problem above, suppose the committee requires all of its members to attend a training session. They plan to hold the same training session at multiple times  so that all members will be able to attend one of the sessions. After viewing each member's availability the committee then asks, ``what is the minimum number of training sessions needed so that each member can attend at least one of the sessions?'' In \cite{SuZerbib}, Su and Zerbib call this collection of training sessions a \textit{representative candidate set} or, using language from convex geometry, a \textit{piercing set}. In this paper we use the latter terminology. More formally, given a collection of sets, a \textit{piercing set} is a collection of points such that each set contains at least one of the points. The size of the smallest possible piercing set is the \textit{piercing number} of the collection of sets. Thus, if we model the above scenario geometrically, then the collection of time intervals at which the members are available is a collection of sets, and the minimum number of training sessions needed is the piercing number.

Su and Zerbib recently contextualized piercing numbers in terms of approval voting \cite{SuZerbib}. They present a collection of piercing results that best apply to approval voting scenarios that are modeled using intervals on a line. Most notably, they cite Hadwiger and Debrunner’s  \cite{HadwigerDebrunner1957} classic 1957 result that provides an upper bound on the piercing number of convex sets in $\R^d$ based on a local intersection property and state its implications in approval voting. Su and Zerbib then leverage this result to make statements regarding piercing sets and piercing numbers in various approval voting scenarios. For example, in a scenario modeled with a collection of intervals on the real number line, if two out of every four voters agree on a candidate, then the piercing number is at most three. Thus, there is a piercing set that contains three candidates.

Such a result does not apply to the above training session scenario because this scenario is best modeled using arcs on a circle (instead of intervals on the real line), to account for the periodic nature of the calendar year or a clock (e.g., members are able to select an availability arc of December--January). In approval voting we call this model a \textit{circular society}. The piercing number for circular societies has scarcely been explored (see Final Remark 1 in \cite{SuZerbib}).

This paper explores piercing numbers of circular societies.
After reviewing the mathematical background on approval voting in \Cref{background}, we provide a complete characterization of possible piercing numbers in circular societies in which all arcs have the same fixed length in \Cref{fixedLength}.  Then, in \Cref{probability}, we explore the probability of certain piercing numbers in circular societies when arcs of a fixed length are randomly distributed around the circle.

\section{Background}\label{background}

Any approval voting scenario consists of a collection of voters and a collection of candidates that voters may approve of. We refer to the set of  candidates  as the \textit{spectrum} $X$. The set of candidates acceptable to an individual voter is called an \textit{approval set}. Thus, each voter has an approval set that is a subset of the spectrum. 

\begin{definition}\label{Society}We define a \textit{society} $\mathcal{S}$ of $n$ voters to be a pair $(X,\A)$ in which $X$ is a spectrum and $\A$ is a collection of $n$ approval sets.
\end{definition}

Despite the term ``candidate'' seemingly implying discreteness, we study societies in which the spectra are continuous. In \cite{BergEtal}, Berg et al.\ introduced the language of a society,  focusing on societies in which the spectrum is the real number line and approval sets are intervals. These societies are called \textit{linear societies}. Alternatively, in a \textit{circular society}, the spectrum is a circle and approval sets are arcs on the circle. Circular societies were first studied by Hardin \cite{Hardin}. The authors of this paper previously studied products of linear and circular societies \cite{MazurEtal}.

Given a society $\mathcal{S}=(X,\A)$, since $\A$ is a collection of sets, we can consider a piercing set of $\A$ or the piercing number of $\A.$ In the context of an approval voting society $\mathcal{S}$, a piercing set of $\A$ is a collection of candidates in $X$ such that each voter approves of at least one candidate in the collection.  The piercing number is the smallest number of candidates needed so that every voter is happy. Thus, we refer to the piercing number of $\A$ as the piercing number of the society $\mathcal{S}$ and denote it $\tau(\mathcal{S})$. Moreover, if an approval set $A$ contains a candidate $x$ we say that $x$ \textit{pierces} $A$. 

Su and Zerbib \cite{SuZerbib} use the notion of $(k,m)$-agreeability to give upper bounds on the piercing numbers of linear and circular societies. A society is $(k,m)$-agreeable \cite{BergEtal} if $k$ out of every $m$ approval sets intersect. For example, in a $(2,3)$-agreeable society, two out of every three voters agree on a candidate. Su and Zerbib’s result, given in Theorem \ref{PierceNumLinSoc} below, follows directly from a theorem  of Hadwiger and Debrunner \cite{HadwigerDebrunner1957}  that gives conditions of overlap for families of convex sets. (See Theorem 5 in \cite{SuZerbib}.)  

\begin{theorem}[Piercing Numbers in Linear Societies \cite{SuZerbib}]\label{PierceNumLinSoc} If $\mathcal{S}$ is a $(k,m)$-agreeable linear society then  we can find a piercing set that contains $m-k+1$ candidates. Hence, $\tau(\mathcal{S}) \le m-k+1$.
\end{theorem}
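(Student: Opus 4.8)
The plan is to prove that a $(k,m)$-agreeable linear society has piercing number at most $m-k+1$ by a greedy sweep-line argument. The key observation is that on a line, among any collection of intervals, one can always pierce the "leftmost-ending" intervals efficiently. I would process the approval sets according to their right endpoints and repeatedly place a piercing point at the smallest right endpoint among the unpierced intervals, arguing that this greedy choice is optimal and that the $(k,m)$-agreeability condition caps how many points are required.

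**First I would** set up the greedy piercing procedure precisely. Let $S = (X, \mathcal{A})$ be a $(k,m)$-agreeable linear society. Sort the approval sets (closed bounded intervals) by their right endpoints. Place the first piercing point $p_1$ at the smallest right endpoint; this point pierces that interval and every interval containing it. Remove all pierced intervals, and repeat on the remaining intervals to obtain $p_2, p_3, \dots$. This produces a piercing set $P = \{p_1, \dots, p_\tau\}$ where $\tau = \tau(S)$, and by construction the points are strictly increasing: $p_1 < p_2 < \cdots < p_\tau$. The standard fact here is that this greedy algorithm is optimal for piercing intervals on a line, so the number of points it produces equals the true piercing number.

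**The crux of the argument** is to bound $\tau$ using $(k,m)$-agreeability. For each piercing point $p_i$, I would select one interval $A_i$ that is pierced by $p_i$ but by none of the earlier points $p_1, \dots, p_{i-1}$ — such an interval exists precisely because $p_i$ was introduced to cover an interval missed by the earlier points. The decisive claim is that these chosen intervals $A_1, \dots, A_\tau$ are pairwise disjoint: if $A_i$ and $A_j$ with $i < j$ intersected, their common point would lie at or beyond $p_i$ (since $A_i$ is not pierced by any earlier point but is pierced by $p_i$, its right endpoint is the relevant bound), forcing $A_j$ to also be pierced by $p_i$, contradicting the choice of $A_j$. I expect verifying this pairwise-disjointness to be the main obstacle, since it requires carefully tracking the relationship between an interval's right endpoint and the greedy point that covers it.

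**Finally**, once I have $\tau$ pairwise-disjoint approval sets $A_1, \dots, A_\tau$, I would invoke the contrapositive of $(k,m)$-agreeability. If $\tau \geq m-k+2$, then among any $m$ of these disjoint intervals no two can intersect, so the largest agreeing subcollection among any $m$ of them has size $1 < k$ (assuming $k \geq 2$), violating $(k,m)$-agreeability. More carefully, $\tau$ pairwise-disjoint sets mean that in any size-$m$ subcollection drawn from them the maximum number sharing a common point is $1$, which contradicts the requirement that $k$ of every $m$ agree whenever $\tau \geq m$; a short counting argument pins the bound at $\tau \leq m - k + 1$. The cleanest route is to suppose $\tau \geq m-k+2$ for contradiction, extract $m$ among the disjoint $A_i$ (padding with other approval sets if $\tau < m$ is handled separately since then $\tau \leq m - 1 \leq m - k + 1$ trivially when $k \geq 1$), and observe that no $k \geq 2$ of these mutually disjoint sets can share a candidate.
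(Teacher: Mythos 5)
Your overall strategy---run the greedy right-endpoint sweep, extract from it a family of pairwise disjoint approval sets, and then use $(k,m)$-agreeability to cap the size of any disjoint family---is sound, and it is genuinely different from the paper's treatment: the paper does not prove this theorem from scratch but cites Su and Zerbib and notes it follows from the Hadwiger--Debrunner theorem (\Cref{theorem:hadwigerDebrunner} with $d=1$, where the condition $(d-1)p<d(q-1)$ is automatic). Your route amounts to reproving the one-dimensional case of that theorem using exactly the machinery the paper develops later in \Cref{alg:linear} and \Cref{LinearAlgorithm}. However, two steps fail as written. The first is your disjointness claim. You allow $A_i$ to be \emph{any} interval pierced by $p_i$ but by no earlier point, and assert these are pairwise disjoint; that is false. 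Consider the society $\left\{[0.5,1],\ [0,2],\ [1.5,3]\right\}$: greedy gives $p_1=1$, which pierces both $[0.5,1]$ and $[0,2]$, so your rule permits $A_1=[0,2]$; then $p_2=3$ with $A_2=[1.5,3]$, yet $A_1\cap A_2=[1.5,2]\neq\emptyset$. (Your parenthetical is also backwards: you need the common point of $A_i$ and $A_j$ to lie at or \emph{before} $p_i$, not beyond it.) The repair is to take $A_i$ to be the interval whose right endpoint \emph{is} $p_i$, as in \Cref{alg:linear}: then any $x\in A_i\cap A_j$ with $i<j$ satisfies $x\le p_i$, while the greedy choice forces the right endpoint of $A_j$ to be at least $p_i$ (since $A_j$ was still unpierced at step $i$), so the interval $A_j$ would contain $p_i$, contradicting that $A_j$ survives past step $i$.

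The second gap is the endgame. From the contradiction hypothesis $\tau\ge m-k+2$ you cannot ``extract $m$ among the disjoint $A_i$'' unless $k\le 2$, and your fallback inequality $\tau\le m-1\le m-k+1$ is false for every $k\ge 3$, so the case $m-k+2\le\tau<m$ is simply not handled. The correct counting argument: given $m-k+2$ pairwise disjoint approval sets, adjoin any $k-2$ \emph{other} approval sets of the society to form a collection of $m$ sets. By $(k,m)$-agreeability, some $k$ of these share a common candidate; since pairwise disjoint sets can share no candidate, at most one of the agreeing sets is among the disjoint ones, so at least $k-1$ of them lie among the $k-2$ added sets---a contradiction. (This padding needs the society to have at least $m$ approval sets; if it has fewer, one must fall back on a reduction in the spirit of \Cref{LemmaOnKMAgree}, as the paper does in the proof of \Cref{circularpiercingnum1}.) With these two repairs---the right-endpoint selection rule and the padded counting argument---your proof goes through.
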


To obtain an analogous result for a circular society $\mathcal{S}$, we can arbitrarily remove a candidate on the spectrum, cut the circle at that point and ``unroll'' it to produce a linear society. Applying \cref{PierceNumLinSoc} and including the removed candidate as a piercing point, we obtain an upper bound for $\tau(\mathcal{S})$.

\begin{corollary}[Piercing Numbers in Circular Societies \cite{SuZerbib}]\label{circularpiercingnum1}
Suppose $\mathcal{S}$ is a $(k,m)$-agreeable circular society. Then $\mathcal{S}$ has a piercing set that contains $m-k+2$ candidates. Hence, $\tau(\mathcal{S}) \le m-k+2$.
\end{corollary}

While \cref{PierceNumLinSoc} and \cref{circularpiercingnum1} are mathematically interesting, results that give information about piercing based on $(k,m)$-agreeability are difficult to apply to approval voting in practice. In an approval voting scenario, in order to determine if the society is $(k,m)$-agreeable we must know all voter preferences and how they interact with one another. Therefore, we would likely also be able to determine the piercing number. 

In this paper, we study piercing numbers through a different lens. Rather than working with $(k,m)$-agreeable circular societies, we shift our focus to circular societies in which all approval sets have the same length. For example, in a committee scheduling scenario, we could require that all members choose a 3-hour time block during which they are available.

\begin{definition}\label{FixedLength} Let $\mathcal{S}=(X,\A)$ be a circular society. We say that $\mathcal{S}$ is a \textit{fixed-length} circular society if all approval sets in $\A$ have the same length. We call this fixed length the \textit{approval set length} of $\mathcal{S}$ and denote it $p$.
\end{definition}

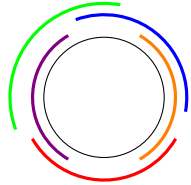
\begin{figure}[tb]
  \begin{center}
    \begin{tikzpicture}
        \draw[] (0,0) circle (0.8cm);
        \draw[orange, very thick] (-60:.95) arc (-60:60:.95cm);
        \draw[violet, very thick] (120:.95) arc (120:240:.95cm);
        \draw[blue, very thick] (-10:1.1) arc (-10:110:1.1cm);
        \draw[green, very thick] (80:1.25) arc (80:200:1.25cm);
        \draw[red, very thick] (210:1.1) arc (210:330:1.1cm);
    \end{tikzpicture}
  \end{center}
  \caption{A fixed-length circular society with five voters. The black circle is the spectrum, and each colored arc represents the approval set of one voter.}
  \label{fig:fiveVoters}
\end{figure}

In order to work with such societies more clearly and concisely, we lay out some useful conventions for circular societies that we use throughout this paper.

\begin{remark}[Circular Societies as $\R/\Z$]\label{RModZ}  We use the classic topological property that a circle is equivalent to $\R/\Z$, identifying the points $x$ and $x+n$ for any real number $x$ and integer $n$. In other words, given a society $\mathcal{S} = (X,\A)$, we represent the circular spectrum $X$ as the interval $[0,1)$, identifying $0$ and $1$. In doing so, we force the circumference of $X$ to be $1$, rescaling all approval sets as needed. Thus, in a fixed-length circular society, the approval set length $p$ is less than 1 and can be thought of as a proportion of the circumference of the spectrum. For example, suppose we are modeling time preferences on a 24-hour clock. If we require individual approval sets to be 3-hour time windows, then since the spectrum circumference is 1, the approval set length $p$ is $\frac{1}{8}$.  \end{remark}

 \begin{remark}[Interval Notation of Approval Sets]\label{IntervalNotation} Following Remark \ref{RModZ}, identifying the spectrum with $\R/\Z$ allows us to depict fixed-length circular societies ``linearly'' with approval sets as intervals (or a union of two intervals if the corresponding arc crosses the point corresponding to $0$) of length $p$, as in \cref{fig:FixedLengthEx}. We also assume that approval sets are closed unless otherwise noted. 

Further, we often label an approval set as $A=[\ell,r]$, referring to $\ell$ as the ``left'' endpoint and $r$ as the ``right'' endpoint. This might seem a bit backwards when considering intervals that contain the point $0=1$. For example, in Figure 4, the left endpoint of the brown interval labeled $A$ is to the right of the right endpoint in the linear depiction. Alternatively, we can think of $\ell$ as the endpoint that is counterclockwise from the center of the interval and $r$ as the endpoint that is clockwise from the center. 
\end{remark}

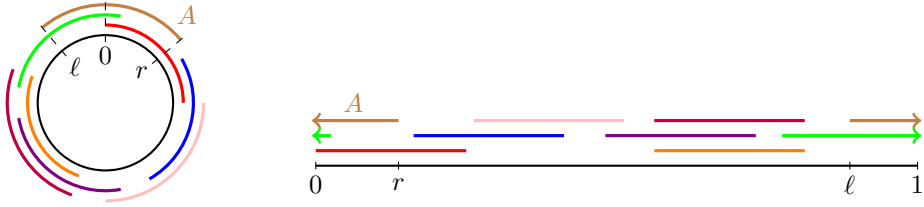
\begin{figure}[ht]
\begin{center}
\begin{tikzpicture}[scale=.9]
    \draw[thick] (0,0) circle [radius=1];
	\draw[red, very thick] ([shift=(0:1.15)]0,0) arc (0:90:1.15);
	\draw[orange, very thick] ([shift=(160:1.15)]0,0) arc (160:250:1.15);
	\draw[green, very thick] ([shift=(80:1.3)]0,0) arc (80:170:1.3);
	\draw[blue, very thick] ([shift=(300:1.3)]0,0) arc (300:390:1.3);
	\draw[violet, very thick] ([shift=(190:1.3)]0,0) arc (190:280:1.3);
	\draw[brown, very thick] ([shift=(40:1.45)]0,0) arc (40:130:1.45);
	\draw[pink, very thick] ([shift=(270:1.45)]0,0) arc (270:360:1.45);
    \draw[purple, very thick] ([shift=(160:1.45)]0,0) arc(160:250:1.45);
    \draw[dashed] (0,0.9)--(0,1.6);
    \node at (0,0.7) {$0$};
    \draw[dashed] (40:0.9)--(40:1.6);
    \node at (40:0.7) {$r$};
    \draw[dashed] (130:0.9)--(130:1.6);
    \node at (130:0.7) {$\ell$};
    \node[brown] at (1.2,1.3) {$A$};
\end{tikzpicture} 
\hspace{1cm}
\begin{tikzpicture}
    \draw[thick] (0,0) -- (8,0);
    \draw (0,0.05) -- (0,-0.05);
    \node at (0,-0.25) {$0$};
    \draw (8,0.05) -- (8,-0.05);
    \node at (8,-0.25) {$1$};
    
    \draw[blue, very thick] (1.3,0.4) -- (3.3, 0.4);
    \draw[pink, very thick] (2.1,0.6)--(4.1, 0.6);
    \draw[violet, very thick] (3.85,0.4)--(5.85,0.4);
    \draw[orange, very thick] (4.5,0.2)--(6.5,0.2);
    \draw[green, very thick,->] (6.2,0.4) -- (8.05,0.4);
    \draw[green, very thick, <-] (-0.05,0.4)--(0.2,0.4);
    \draw[red, very thick] (0,0.2) -- (2,0.2);
    \draw[brown, very thick, ->] (7.1, 0.6)--(8.05,0.6);
    \draw[brown, very thick, <-] (-0.05,0.6)--(1.1,0.6);
    \draw[purple, very thick] (4.5, 0.6) -- (6.5, 0.6);
    
    \node[brown] at (0.5,0.83) {$A$};
    \draw (7.1,0.05) -- (7.1,-0.05);
    \node at (7.1,-0.25) {$\ell$};
    \draw (1.1,0.05) -- (1.1,-0.05);
    \node at (1.1,-0.25) {$r$};
\end{tikzpicture}
\end{center}
\caption{Two depictions of the same fixed-length circular society with approval set length $p=\frac{1}{4}$. Endpoints of approval set $A$ are labeled. We cut the figure on the left at the dashed line and ``unroll'' to obtain the figure on the right. The arrows on two approval sets on the right represent that these approval sets ``wrap around.'' }
\label{fig:FixedLengthEx}
\end{figure}


\section{Possible Piercing Numbers}\label{fixedLength}

Piercing numbers depend on both the size of the approval sets and the number of voters.
Intuitively societies with shorter approval sets may require more points to pierce all sets. For example, consider societies with five voters. If the approval set length is very small (more precisely, less than $\frac{1}{5}$), then the piercing number can range from $1$ to $5$, depending on how the approval sets overlap. However, if the approval set length is very large (greater than $\frac{4}{5}$), then the piercing number must be $1$; this is because the complements of all approval sets cannot cover the entire circle, so there is a point on the circle that is not in the complement of any approval set, and is thus contained in every approval set. The story becomes murkier when approval set lengths are between $\frac{1}{5}$ and $\frac{4}{5}$. 
For example, in \cref{pnumless} we show that when the approval set length is between $\frac{2}{5}$ and $\frac{4}{5}$, the piercing number is less than or equal to two. However, in \cref{pnumgreater} we construct a society with approval set length $\frac{2}{5}-\epsilon$ in which the piercing number equals three. 

Changing perspective, we can consider what happens when we fix the approval set length and vary the number of voters. In general, a society with more voters may have a larger piercing number. For example, a society with approval set length of $\frac{1}{3}$ and four voters will have a  piercing number of at most two, while a society with five voters and approval set length of $\frac{1}{3}$ can have a piercing number of three, as demonstrated in \cref{fig:fiveVoters}. The reasoning behind the first result is nontrivial; the justification relies on the proof of \cref{pnumless}.

\cref{pnFigure} illustrates our main results in this section, stated formally in \cref{pnumless} and \cref{pnumgreater}. In the figure we see how the maximum possible piercing number for any fixed-length circular society depends on the number of voters $n$ and the approval set length $p$. Further, the figure is  best understood by observing the points where the color changes, drawn as dots in the figure.
These dots give thresholds in the $(n,p)$-space at which the maximum piercing number changes by one.
We make this idea precise as follows. Given any $n \in \mathbb{N}$ and $k \in \{1, 2, \ldots, n\}$, there exists a \emph{critical length}
\begin{equation}\label{clEq}
    \cl(n,k) = \dfrac{\left\lceil \frac{n}{k} \right\rceil - 1}{k \left\lceil \frac{n}{k} \right\rceil - k + 1}
\end{equation}
that gives the threshold $p$ below which piercing number $\tau = k+1$ is possible for a fixed-length circular society of $n$ voters.  The following theorems state the relationship between $Q(n,k)$ and possible piercing numbers.

\begin{figure}[h]
    \centering
        \begin{tikzpicture}[scale=1.2]
        \draw[->] (0.5,0) node[left] {$0$} -- (9,0);
        \draw[->] (0.5,4) node[left] {$1$} -- (9,4);
        \draw (0.5,0) -- (0.5,4);
        \node at (0.2,2) {$p$};
        
        \node[below] at (9,0) {$n$};
        \foreach \i in {1, ..., 8} {
            \node[below] at (\i,0) {$\i$};
        }
        
        \draw[gray] (1.7,2) node[left] {\small $\frac{1}{2}$} -- (2.3,2);
        \draw[gray] (2.7,2.67) node[left] {\small $\frac{2}{3}$} -- (3.3,2.67);
        \draw[gray] (3.7,3) node[left] {\small $\frac{3}{4}$} -- (4.3,3);
        \draw[gray] (4.7,3.2) node[left] {\small $\frac{4}{5}$} -- (5.3,3.2);
        \draw[gray] (5.7,3.33) node[left] {\small $\frac{5}{6}$} -- (6.3,3.33);
        \draw[gray] (6.7,3.43) node[left] {\small $\frac{6}{7}$} -- (7.3,3.43);
        \draw[gray] (7.7,3.5) node[left] {\small $\frac{7}{8}$} -- (8.3,3.5);
        
        \draw[gray] (2.7,1.33) node[left] {\small $\frac{1}{3}$} -- (4.3,1.33);
        \draw[gray] (4.7,1.6) node[left] {\small $\frac{2}{5}$} -- (6.3,1.6);
        \draw[gray] (6.7,1.71) node[left] {\small $\frac{3}{7}$} -- (8.3,1.71);
        
        \draw[gray] (3.7,1) node[left] {\small $\frac{1}{4}$} -- (6.3,1);
        \draw[gray,->] (6.7,1.14) node[left] {\small $\frac{2}{7}$} -- (9,1.14);
        
        \draw[gray] (4.8,0.8) node[left] {\scriptsize $\frac{1}{5}$} -- (8.2,0.8);
        
        \draw[gray, ->] (5.8,0.67) node[below left=-0.2 and -0.1] {\scriptsize $\frac{1}{6}$} -- (9,0.67);
        
        \draw[gray, ->] (6.8,0.57) node[below left=-0.1 and -0.1] {\scriptsize $\frac{1}{7}$} -- (9,0.57);
        
        \draw[gray, ->] (7.8,0.5) node[below left=-0.1 and -0.1] {\scriptsize $\frac{1}{8}$} -- (9,0.5);

        \draw[red, line width=2pt] (1,0) -- (1,4);
        \fill[red] (1,4) circle (2.3pt);
        \fill[red] (1,0) circle (2.3pt);
        
        \draw[red, line width=2pt] (2,2) -- (2,4);
        \draw[orange, line width=2pt] (2,0) -- (2,2);
        \fill[red] (2,4) circle (2.3pt);
        \fill[red] (2,2) circle (2.3pt);
        \fill[orange] (2,0) circle (2.3pt);
        
        \draw[red, line width=2pt] (3,2.67) -- (3,4);
        \draw[orange, line width=2pt] (3,1.33) -- (3,2.67);
        \draw[yellow!60!brown, line width=2pt] (3,0) -- (3,1.33);
        \fill[red] (3,4) circle (2.3pt);
        \fill[red] (3,2.67) circle (2.3pt);
        \fill[orange] (3,1.33) circle (2.3pt);
        \fill[yellow!60!brown] (3,0) circle (2.3pt);
        
        \draw[red, line width=2pt] (4,3) -- (4,4);
        \draw[orange, line width=2pt] (4,1.33) -- (4,3);
        \draw[yellow!60!brown, line width=2pt] (4,1) -- (4,1.33);
        \draw[green, line width=2pt] (4,0) -- (4,1);
        \fill[red] (4,4) circle (2.3pt);
        \fill[red] (4,3) circle (2.3pt);
        \fill[orange] (4,1.33) circle (2.3pt);
        \fill[yellow!60!brown] (4,1) circle (2.3pt);
        \fill[green] (4,0) circle (2.3pt);
        
        \draw[red, line width=2pt] (5,3.2) -- (5,4);
        \draw[orange, line width=2pt] (5,1.6) -- (5,3.2);
        \draw[yellow!60!brown, line width=2pt] (5,1) -- (5,1.6);
        \draw[green, line width=2pt] (5,0.8) -- (5,1);
        \draw[blue, line width=2pt] (5,0) -- (5,0.8);
        \fill[red] (5,4) circle (2.3pt);
        \fill[red] (5,3.2) circle (2.3pt);
        \fill[orange] (5,1.6) circle (2.3pt);
        \fill[yellow!60!brown] (5,1) circle (2.3pt);
        \fill[green] (5,0.8) circle (2.3pt);
        \fill[blue] (5,0) circle (2.3pt);
        
        \draw[red, line width=2pt] (6,3.33) -- (6,4);
        \draw[orange, line width=2pt] (6,1.6) -- (6,3.33);
        \draw[yellow!60!brown, line width=2pt] (6,1) -- (6,1.6);
        \draw[green, line width=2pt] (6,0.8) -- (6,1);
        \draw[blue, line width=2pt] (6,0.67) -- (6,0.8);
        \draw[cyan, line width=2pt] (6,0) -- (6,0.67);
        \fill[red] (6,4) circle (2.3pt);
        \fill[red] (6,3.33) circle (2.3pt);
        \fill[orange] (6,1.6) circle (2.3pt);
        \fill[yellow!60!brown] (6,1) circle (2.3pt);
        \fill[green] (6,0.8) circle (2.3pt);
        \fill[blue] (6,0.67) circle (2.3pt);
        \fill[cyan] (6,0) circle (2.3pt);
        
        \draw[red, line width=2pt] (7,3.43) -- (7,4);
        \draw[orange, line width=2pt] (7,1.71) -- (7,3.43);
        \draw[yellow!60!brown, line width=2pt] (7,1.14) -- (7,1.71);
        \draw[green, line width=2pt] (7,0.8) -- (7,1.14);
        \draw[blue, line width=2pt] (7,0.67) -- (7,0.8);
        \draw[cyan, line width=2pt] (7,0.57) -- (7,0.67);
        \draw[violet, line width=2pt] (7,0) -- (7,0.57);
        \fill[red] (7,4) circle (2.3pt);
        \fill[red] (7,3.43) circle (2.3pt);
        \fill[orange] (7,1.71) circle (2.3pt);
        \fill[yellow!60!brown] (7,1.14) circle (2.3pt);
        \fill[green] (7,0.8) circle (2.3pt);
        \fill[blue] (7,0.67) circle (2.3pt);
        \fill[cyan] (7,0.57) circle (2.3pt);
        \fill[violet] (7,0) circle (2.3pt);
        
        \draw[red, line width=2pt] (8,3.5) -- (8,4);
        \draw[orange, line width=2pt] (8,1.71) -- (8,3.5);
        \draw[yellow!60!brown, line width=2pt] (8,1.14) -- (8,1.71);
        \draw[green, line width=2pt] (8,0.8) -- (8,1.14);
        \draw[blue, line width=2pt] (8,0.67) -- (8,0.8);
        \draw[cyan, line width=2pt] (8,0.57) -- (8,0.67);
        \draw[violet, line width=2pt] (8,0.5) -- (8,0.57);
        \draw[magenta, line width=2pt] (8,0) -- (8,0.5);
        \fill[red] (8,4) circle (2.3pt);
        \fill[red] (8,3.5) circle (2.3pt);
        \fill[orange] (8,1.71) circle (2.3pt);
        \fill[yellow!60!brown] (8,1.14) circle (2.3pt);
        \fill[green] (8,0.8) circle (2.3pt);
        \fill[blue] (8,0.67) circle (2.3pt);
        \fill[cyan] (8,0.57) circle (2.3pt);
        \fill[violet] (8,0.5) circle (2.3pt);
        \fill[magenta] (8,0) circle (2.3pt);
        
        
        \fill[red] (10,3.4) rectangle ++(0.2,0.2);
        \node[red,right] at (10.3,3.5) {$\tau = 1$};
        \fill[orange] (10,3) rectangle ++(0.2,0.2);
        \node[orange,right] at (10.3,3.1) {$\tau \le 2$};
        \fill[yellow!30!brown] (10,2.6) rectangle ++(0.2,0.2);
        \node[yellow!30!brown,right] at (10.3,2.7) {$\tau \le 3$};
        \fill[green] (10,2.2) rectangle ++(0.2,0.2);
        \node[green,right] at (10.3,2.3) {$\tau \le 4$};
        \fill[blue] (10,1.8) rectangle ++(0.2,0.2);
        \node[blue,right] at (10.3,1.9) {$\tau \le 5$};
        \fill[cyan] (10,1.4) rectangle ++(0.2,0.2);
        \node[cyan,right] at (10.3,1.5) {$\tau \le 6$};
        \fill[violet] (10,1) rectangle ++(0.2,0.2);
        \node[violet,right] at (10.3,1.1) {$\tau \le 7$};
        \fill[magenta] (10,0.6) rectangle ++(0.2,0.2);
        \node[magenta,right] at (10.3,0.7) {$\tau \le 8$};
        
      \end{tikzpicture}
    \caption{Maximum possible piercing numbers for fixed-length circular societies with $n$ voters and approval set length $p$.}
    \label{pnFigure}
\end{figure}
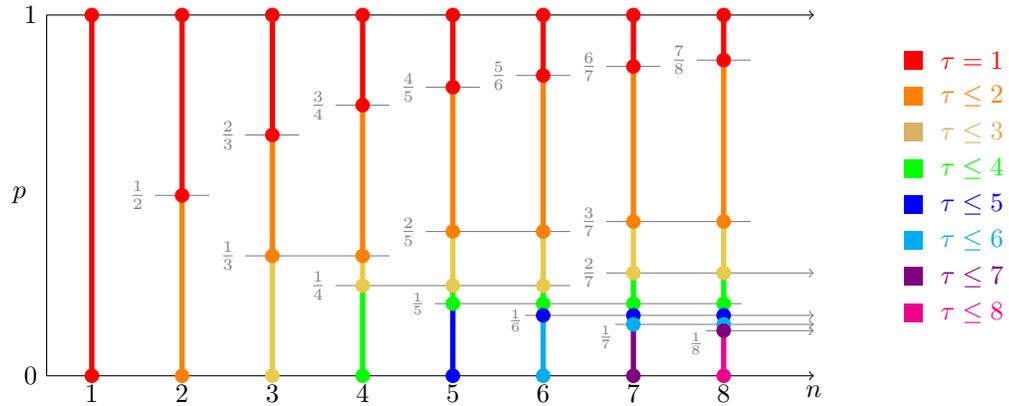

\begin{theorem}\label{pnumless}
    Let $n \in \mathbb{N}$ and $k \in \{1, 2, \ldots, n\}$.
    Then a fixed-length circular society $\mathcal{S}$ with $n$ voters and approval length $p \ge \cl(n,k)$ has piercing number $\tau(\mathcal{S}) \leq k$.
    
\end{theorem}

\begin{theorem}\label{pnumgreater}
    Let $n \in \mathbb{N}$, $k \in \{1, 2, \ldots, n-1\}$, and $0 < p < \cl(n,k)$.
    There exists a fixed-length circular society $\mathcal{S}$ with $n$ voters, approval set length $p$, and piercing number $\tau(\mathcal{S}) = k+1$.
\end{theorem}

    

Before proving these theorems, we offer some comments about $\cl(n,k)$. We find the expression for $\cl(n,k)$ in \cref{clEq} to be non-obvious but descriptive of certain patterns in possible piercing numbers, most notably the stepped asymptotic behavior of the threshold values that appear in \cref{pnFigure}.

Intuitively, with more voters, longer approval sets are needed to guarantee a specific piercing number.
Thus, as the number of voters $n$ increases, the threshold for $p$ below which piercing number $k$ is possible increases as well. That is, for a fixed piercing number $k$, $\cl(n,k)$ increases with $n$, but this increase is strictly monotonic only for $k=1$. 
In general, we find that $\cl(n,k)$ increases in steps of size $k$. \Cref{pnFigure} highlights this by horizontal line segments connecting equal values of $\cl(n,k)$. For example, in a society of $4$ voters, the approval set length must be $p \ge \frac{1}{4}$ in order to guarantee a piercing number of $3$ or less, and this minimum length holds  if a fifth or sixth voter joins the society. However, if a seventh voter joins the society, then the approval set length must be at least $\frac{2}{7}$ to guarantee a piercing number of $3$ or less.


Furthermore, for fixed $k$, $\cl(n,k)$ is bounded above by an asymptote at $\frac{1}{k}$.
Specifically, 
\[ \lim_{n \to \infty} \cl(n,k) = \frac{1}{k}. \]
For example, a piercing number of $1$ is only guaranteed when $p \ge \frac{n-1}{n}$; below this, piercing number $2$ is possible. Hence, since $\lim_{n\to \infty}\frac{n-1}{n}=1$, if a society has a large number of voters then a piercing number of 2 is possible even when the approval set length is very close to $1$. Likewise, as $n \to \infty$ a piercing number of $3$ becomes possible for approval set lengths $p$ approaching $\frac{1}{2}$ from below, yet a piercing number of $3$ is never possible for $p \ge \frac{1}{2}$.
Similar results hold for greater piercing numbers: as $n \to \infty$, a piercing number of $k+1$ is only possible for $p$ approaching $\frac{1}{k}$ from below.

We now prove \cref{pnumless}.

\begin{proof}[Proof of \cref{pnumless}]
   Let $\mathcal{S} = (X,\mathcal{A})$ be a fixed-length circular society with $n$ voters and approval set length $p\ge Q(n,k)$. We will show that $\tau(\mathcal{S}) \le k$. To simplify notation, let $q = \left\lceil \frac{n}{k} \right\rceil$, so $\cl(n,k) = \frac{q-1}{kq-(k-1)}$. We proceed in two cases, depending on whether there exists an open arc $C\subseteq X$ of  length $\frac{1}{kq-(k-1)}$  that contains no \emph{left} endpoints of approval sets. (Recall from Remark~\ref{IntervalNotation} that we call the counterclockwise endpoint of an approval set the left endpoint.)

    For our first case, suppose there exists an open arc $C$ of length $\frac{1}{kq-(k-1)}$ that contains no left endpoints of approval sets.
    Choose a point $x_1\in C$ between the left endpoint of $C$ and the leftmost right endpoint of any approval set that intersects $C$, as illustrated on the left in \cref{fig:theoremIllustration}. Thus, $x_1$ pierces all approval sets that intersect $C$.
    If no approval set intersects $C$, then choose $x_1 \in C$ arbitrarily.
    
    Any approval set not pierced by $x_1$ is contained in the complement $\overline{C}$ of $C$.
    The length of $\overline{C}$ is $1 - \frac{1}{kq-(k-1)} = \frac{kq-k}{kq-(k-1)}$.
    There are $k-1$ equally-spaced points $x_2, x_3, \ldots, x_{k}$ in $\overline{C}$ that divide $\overline{C}$ into $k$ intervals of equal length $\frac{q-1}{kq-k+1}$.
    Any approval set of length $p \geq \frac{q-1}{kq-(k-1)}$ must be pierced by at least one of these points. Thus, $\{x_1, x_2, x_3, \ldots, x_{k}\}$ is a piercing set, so $\tau(\mathcal{S}) \le k$.
    
    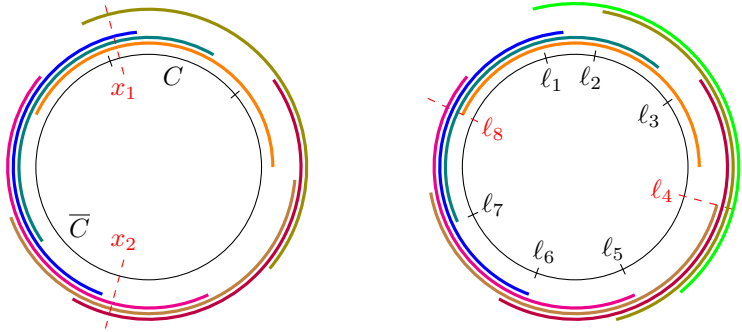
\begin{figure}[ht]
      \centering
      \begin{tikzpicture}[scale=0.75]
        \draw (0,0) circle (2);
        \draw (40:1.9) -- (40:2.1);
        \draw (110:1.9) -- (110:2.1);
        \node at (75:1.68) {$C$};
        \node at (220:1.6) {$\overline{C}$};
        
        \draw[orange, very thick] (0:2.2) arc (0:155:2.2);
        \draw[teal, very thick] (60:2.3) arc (60:215:2.3);
        \draw[blue, very thick] (95:2.4) arc (95:250:2.4);
        \draw[magenta, very thick] (140:2.5) arc (140:295:2.5);
        \draw[brown, very thick] (200:2.6) arc (200:355:2.6);
        \draw[purple, very thick] (240:2.7) arc (240:395:2.7);
        \draw[olive, very thick] (320:2.8) arc (320:475:2.8);

        \draw[dashed,red] (105:1.7) node[below] {$x_1$} -- (105:3);
        \draw[dashed,red] (255:1.7) node[above] {$x_2$} -- (255:3);
      \end{tikzpicture}\hspace{30pt}
      \begin{tikzpicture}[scale=0.75]
        \draw (0,0) circle (2);
        
        \draw[orange, very thick] (0:2.2) arc (0:155:2.2);
        \draw[red,dashed] (155:1.9) -- (155:3);
        \node[red] at (155:1.6) {$\ell_8$};
        
        \draw[teal, very thick] (50:2.3) arc (50:205:2.3);
        \draw (205:1.9) -- (205:2.1);
        \node at (205:1.6) {$\ell_7$};
        
        \draw[blue, very thick] (95:2.4) arc (95:250:2.4);
        \draw (250:1.9) -- (250:2.1);
        \node at (250:1.6) {$\ell_6$};
        
        \draw[magenta, very thick] (140:2.5) arc (140:295:2.5);
        \draw (295:1.9) -- (295:2.1);
        \node at (295:1.6) {$\ell_5$};
        
        \draw[brown, very thick] (190:2.6) arc (190:345:2.6);
        \draw[red,dashed] (345:1.9) -- (345:3);
        \node[red] at (345:1.6) {$\ell_4$};
        
        \draw[purple, very thick] (240:2.7) arc (240:395:2.7);
        \draw (35:1.9) -- (35:2.1);
        \node at (35:1.6) {$\ell_3$};
        
        \draw[olive, very thick] (285:2.8) arc (285:440:2.8);
        \draw (80:1.9) -- (80:2.1);
        \node at (80:1.6) {$\ell_2$};
        
        \draw[green, very thick] (310:2.9) arc (310:465:2.9);
        \draw (105:1.9) -- (105:2.1);
        \node at (105:1.6) {$\ell_1$};
      \end{tikzpicture}
      \caption{In the proof of \cref{pnumless}, either there exists an open arc $C$ of length $\frac{1}{kq-(k-1)}$ containing no left endpoints of approval sets (illustrated at left), or there does not exist such an arc $C$ (illustrated at right).}
      \label{fig:theoremIllustration}
    \end{figure}
    
    For the second case, suppose there does not exist an (open) arc $C\subseteq X$ of length $\frac{1}{kq-(k-1)}$ that contains no \emph{left} endpoints of approval sets.
    This implies $n \ge kq - (k-2)$, for if $n < kq - (k-2)$ then there are too few left endpoints of approval sets in $X$ to avoid a gap of size $\frac{1}{kq-(k-1)}$.
    Label the left endpoints of approval sets consecutively $\ell_1, \ell_2, \ldots, \ell_n$, as illustrated on the right in \cref{fig:theoremIllustration}. 
    
    Consider the points $\ell_q, \ell_{2q}, \ell_{3q}, \ldots, \ell_{(k-1)q}, \ell_n$.
    Since every two consecutive left endpoints of approval sets are separated by an arc of length less than $\frac{1}{kq-(k-1)}$, the arc from $\ell_1$ to $\ell_q$ has length less than $\frac{q-1}{kq-(k-1)}$. 
    Since approval sets have length $p \geq \frac{q-1}{kq-(k-1)}$, this implies that the point $\ell_q$ pierces the approval sets whose left endpoints are $\ell_1, \ell_2, \ldots, \ell_q$.
    Likewise, each point $\ell_{2q}, \ell_{3q}, \ldots, \ell_{(k-1)q}$ pierces $q$ approval sets, and the last point $\ell_n$ pierces the remaining approval sets. Thus, $\{\ell_q, \ell_{2q}, \ell_{3q}, \ldots, \ell_{(k-1)q}, \ell_n\}$ form a piercing set, so $\tau(\mathcal{S}) \le k$.
\end{proof}

Our proof \cref{pnumgreater} relies on the construction of fixed-length circular societies whose approval sets are spaced around the circle as evenly as possible. 
Our construction is similar to the uniform societies defined by Hardin \cite{Hardin}, but in contrast to Hardin, our approval sets are closed rather than half-open. We give the formal definition below.

\begin{definition}
    Let $N$ and $T$ be positive integers such that $N>T$ and let $\epsilon$ be a parameter such that $0 < \epsilon < \frac{1}{N}$. Define the \textit{uniform society} $U_\epsilon(N,T)$ to be the fixed-length circular society  with $N$ voters,  approval set length $\frac{T}{N}-\epsilon$, and approval sets defined as follows.
    Identifying the spectrum with the real numbers modulo $1$ as in \cref{RModZ}, the approval set for voter $i \in \{0, 1, \ldots, N-1\}$ is the interval $A_i = \left[ \frac{i}{N}, \frac{i+T}{N} - \epsilon \right]$. 
\end{definition}

We give an example of a uniform society in \cref{UniformSocPic}.  Note that the combined total length of all approval sets in a uniform society $U_\epsilon(N,T)$ is $N \cdot \left(\frac{T}{N}-\epsilon\right) = T - N\epsilon$. When we assume $\epsilon$ to be small, the total length is approximately $T$.
Furthermore, every point on the spectrum is contained in either $T$ or $T-1$ approval sets.

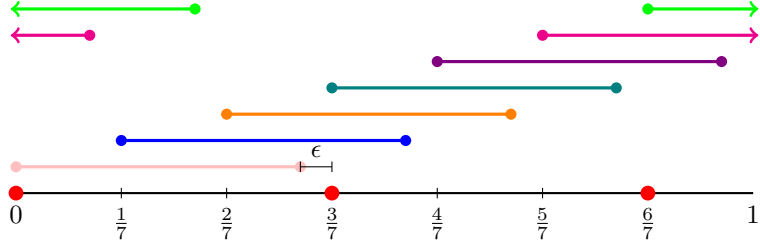
\begin{figure}[h]
\centering
  \begin{tikzpicture}[scale=1.4]
    \draw[thick] (0,0) -- (7,0);
    \node at (0,-0.2) {0};
    \node at (7,-0.2) {1};
    \foreach \x in {1,...,6}:
      \draw (\x, -0.05) node[below] {$\frac{\x}{7}$} -- (\x,0.05);

    \draw[pink, very thick] (0,0.25) -- (2.7,0.25);
    \fill[pink] (0,0.25) circle(1.5pt);
    \fill[pink] (2.7,0.25) circle(1.5pt);
    \draw[blue, very thick] (1, 0.5) -- (3.7, 0.5);
    \fill[blue] (1,0.5) circle(1.5pt);
    \fill[blue] (3.7,0.5) circle(1.5pt);
    \draw[orange, very thick] (2,0.75) -- (4.7,0.75);
    \fill[orange] (2,0.75) circle(1.5pt);
    \fill[orange] (4.7,0.75) circle(1.5pt);
    \draw[teal, very thick] (3,1) -- (5.7,1);
    \fill[teal] (3,1) circle(1.5pt);
    \fill[teal] (5.7,1) circle(1.5pt);
    \draw[violet, very thick] (4,1.25) -- (6.7,1.25);
    \fill[violet] (4,1.25) circle(1.5pt);
    \fill[violet] (6.7,1.25) circle(1.5pt);
    \draw[magenta, very thick, ->] (5,1.5) -- (7.05,1.5);
    \draw[magenta, very thick, <-] (-0.05,1.5) -- (0.7,1.5);
    \fill[magenta] (5,1.5) circle(1.5pt);
    \fill[magenta] (0.7,1.5) circle(1.5pt);
    \draw[green, very thick, ->] (6,1.75) -- (7.05,1.75);
    \draw[green, very thick, <-] (-0.05,1.75) -- (1.7,1.75);
    \fill[green] (6,1.75) circle(1.5pt);
    \fill[green] (1.7,1.75) circle(1.5pt);

    \fill[red] (0,0) circle (2pt);
    \fill[red] (3,0) circle (2pt);
    \fill[red] (6,0) circle (2pt);
    
    \draw (2.7,.2) -- (2.7,.3);
    \draw (3,.2) -- (3,.3);
    \draw (2.7,.25) -- (3,.25);
    \node[above] at (2.85,.25) {$\epsilon$};
    
  \end{tikzpicture}
\caption{The uniform society $U_\epsilon(7,3)$. The highlighted points on the spectrum form the piercing set constructed in \cref{uniformSocietyPiercingNumber}. Hence, the piercing number is three.}
\label{UniformSocPic}
\end{figure}

\begin{lemma}\label{uniformSocietyPiercingNumber}
    The uniform society $U_\epsilon(N,T)$ has piercing number $\left\lceil \frac{N}{T} \right\rceil$.
\end{lemma}
\begin{proof}
    Since every point in the spectrum is contained in at most $T$ approval sets,
    at least $\left\lceil \frac{N}{T} \right\rceil$ points are necessary to pierce all sets.
    
    We claim that the collection 
    \[ W = \left\{0, \frac{T}{N}, 2\frac{T}{N}, 3\frac{T}{N}, \ldots, \left(\left\lceil\frac{N}{T}\right\rceil-1\right)\frac{T}{N} \right\} \]
    is a piercing set. 
    To justify this claim, observe that $W$ consists of $\left\lceil\frac{N}{T}\right\rceil$ points in $[0, 1)$, and the distance between neighboring points in $W$ is $\frac{T}{N}$.
    Since $[0,1) = \mathbb{R}/\mathbb{Z}$, the distance between the points $0$ and $\left(\left\lceil\frac{N}{T}\right\rceil-1\right)\frac{T}{N}$ on the circular spectrum is less than or equal to $\frac{T}{N}$ (for an example, see \Cref{UniformSocPic}). 
    Since each approval set is of length $\frac{T}{N}-\epsilon$ with its left endpoint at a multiple of $\frac{1}{N}$, each approval set must contain a point in $W$. 
    Specifically, piercing point $j\frac{T}{N}$ is contained in $A_i$ for each $i \in \{ jT-(T-1), \ldots, jT-1, jT \}$. Thus, $W$ is a piercing set, and the piercing number is $\left\lceil \frac{N}{T} \right\rceil$.
\end{proof}

We now prove \cref{pnumgreater}.


\begin{proof}[Proof of \cref{pnumgreater}]
    Let $n \in \mathbb{N}$, $k \in \{1, 2, \ldots, n-1\}$, and $0 < p < \cl(n,k)$. We will construct a fixed-length circular society $\mathcal{S}$ with $n$ voters, approval set length $p$, and piercing number $\tau(\mathcal{S}) = k+1$.
    
    First, we construct a uniform society  with piercing number $\tau = k + 1$.
    Let $N = k \left\lceil \frac{n}{k} \right\rceil - k + 1$, the denominator of $Q(n,k)$; $T = \left\lceil \frac{n}{k} \right\rceil - 1$, the numerator of $Q(n,k)$; and $\epsilon = \min\left(\frac{T}{N} - p, \frac{1}{2N}\right)$. 
    Then define the uniform society $\mathcal{U} = U_\epsilon(N,T)$. \cref{uniformSocietyPiercingNumber} implies that $\mathcal{U}$ has piercing number
    \[ \tau = \left\lceil \frac{N}{T} \right\rceil = \left\lceil \frac{k\left(\left\lceil \frac{n}{k} \right\rceil - 1\right) + 1}{\left\lceil \frac{n}{k} \right\rceil-1} \right\rceil = \left\lceil k + \frac{1}{\left\lceil \frac{n}{k} \right\rceil-1} \right\rceil = k+1. \]

    Moreover, $\mathcal{U}$ has $N \le n$ voters. To see this, note that $k \left\lceil \frac{n}{k} \right\rceil$ is the smallest multiple of $k$ that is greater than or equal to $n$, which implies $k \left\lceil \frac{n}{k} \right\rceil - n \le k - 1$. Thus $N = k \left\lceil \frac{n}{k} \right\rceil - k + 1 \le n$. Further, since $\epsilon \le \frac{T}{N} - p$, the approval sets in $\mathcal{U}$ have length $\frac{T}{N}-\epsilon \ge p$. 
    
    
    
    While $\mathcal{U}$ has the desired piercing number, it may not be the society $\S$ that we set out to create because it may not have enough voters or the approval set length may be too big. However, both of these issues are easy to rectify.  If $N < n$, then augment $\mathcal{U}$ with $n-N$ additional approval sets, each given by the interval $[0,p]$ to create a society with $n$ voters and piercing number $k+1$.  If $\frac{T}{N}-\epsilon > p$, shrink each approval set  around its piercing point to obtain a  society with approval sets of length $p$ and piercing number $k+1$. After performing one or both of these operations we have created a society $\mathcal{S}$ as desired.\end{proof}

For example, given $n = 8$ and $k = 2$, following the proof of \cref{pnumgreater}, we can create a society with $8$ voters, an approval set length less than $Q(n,k)=\frac{3}{7}$, and piercing number $\tau= k+1 = 3$.  Computing $N=7$ and $T=3$ as in the proof, we first construct the uniform society $U_\epsilon(7,3)$ that has 7 voters and piercing number $\tau=3$. \Cref{UniformSocPic} illustrates $U_\epsilon(7,3)$ with $p$ slightly less than $\frac{3}{7}$. 
 Adding the interval $[0,p]$ as an eighth approval set does not change the piercing number.
Furthermore, we can modify $U_\epsilon(7,3)$ to make $p$ arbitrarily small while preserving the piercing number $\tau=3$ by contracting each approval set around the piercing point it contains.

\section{Probabilities of Piercing Numbers}\label{probability}

In \Cref{fixedLength}, we explored which piercing numbers are possible for fixed-length circular societies with a given number of voters $n$ and a given approval set length $p$.  For example, because $Q(5, 2) = \frac{2}{5}$, any circular society with $n = 5$ voters and an approval set length $p = \frac{2}{5}$ has a piercing number of at most 2, and we can construct a circular society with $n=5$ voters and approval set length $p = \frac{2}{5} - \epsilon$ (say $p = 0.3999$) that has  piercing number 3. However, the method for constructing such a circular society in the proof of \Cref{pnumgreater} is quite specific. Hence, intuitively, we might believe that most circular societies with $n=5$ and $p=0.3999$ would have a piercing number that is less than the three.


In this section, we develop a probabilistic model for understanding the overall distribution of piercing numbers of fixed-length circular societies. In particular, we explore the space of fixed-length circular societies whose approval sets are randomly-generated with a uniform distribution on the spectrum $X$.  We define these societies below.

\begin{definition}\label{defn:randomsociety}
A \textit{random fixed-length circular society} with $n$ voters and approval set length $p \in (0, 1)$ is a circular society $\S = (X, \A)$ in which $\A$ consists of  $n$ approval sets of the form $A_i = [\ell_i, r_i]$ for $i = 1, \ldots, n$ where
\begin{itemize}
    \item $\ell_1, \ldots, \ell_n$ are independent random variables, uniformly distributed on the spectrum $X$, and
    \item $r_i = \ell_i+ p$.
\end{itemize}
Recall from \Cref{RModZ} that we identify $X$ with $\R / \Z$.  So, $r_i = \ell_i + p \mod 1$.
\end{definition}

Our ultimate goal is to provide a complete probability distribution for the piercing number as a function of the number of voters, $n$, and the approval set length, $p$. In other words, we hope to answer the question below.

\begin{question}\label{question:tauk}
Given a random fixed-length circular society $\S$ with $n$ voters and approval sets of length $p \in (0,1)$, 
and a positive integer $k$, what is the probability that $\tau(\S) = k$?
\end{question}

For arbitrary $n$, we are able to answer this question only if the approval set length $p$ is sufficiently large or sufficiently small. \Cref{pnumless} tells us that if $p \geq Q(n, 1) = \frac{n-1}{n}$, then the piercing number must be 1. If $p$ is sufficiently small, then the  approval sets may be pairwise disjoint, making it possible for the piercing number to equal the number of voters (i.e., $k=n$). The following theorem from Solomon \cite[Equation (4.5)]{solomon} gives us the probability that the piercing number equals the number of voters when $p<\frac{1}{n}$.

\begin{theorem}[Solomon]\label{thm:solomon}
The probability that $n$ arcs of length $p \in [0, 1/n)$, randomly placed on the circle, are pairwise disjoint is $(1-np)^{n-1}$.
\end{theorem}

Determining the probability that the piercing number equals $k$, for $k$ between 1 and the number of voters, is a difficult task. \Cref{randomsociety_tauk} below, which is the main result of this section, provides the probability that the piercing number is $k$ assuming the approval set length is sufficiently small.

\begin{theorem}\label{randomsociety_tauk}
Let $k$ be a positive integer and $\S$ be a random fixed-length circular society with $n$ voters and approval set length $p < \frac{1}{2k}$.  The probability that $\tau(\S) = k$ is
\begin{equation}\label{probEq}
    \binom{n}{k}(1-kp)^{k-1}(kp)^{n-k}.
\end{equation}
\end{theorem}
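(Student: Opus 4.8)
The plan is to reduce the piercing number to a covering problem and then perform a direct probability computation. First I would reformulate $\tau(S)$: since a point $x$ pierces $A_i = [\ell_i,\ell_i+p]$ exactly when $\ell_i \in [x-p,x]$, piercing all arcs with points $x_1,\dots,x_m$ is the same as covering all left endpoints $\ell_1,\dots,\ell_n$ by the length-$p$ windows $[x_j-p,x_j]$. Sliding each window rightward until its left end reaches the leftmost point it covers shows we may assume each window is itself one of the approval sets. Hence $\tau(S)$ equals the minimum number of the arcs $A_1,\dots,A_n$ whose union contains every $\ell_i$. This is exactly the set-packing/covering viewpoint underlying \Cref{linUniqueness}.

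The crux is the following equivalence, valid when $p < \tfrac{1}{2k}$:
\[ \{\tau(S)=k\} = \{\, \text{some $k$ of the $A_i$ are pairwise disjoint with union containing all } \ell_1,\dots,\ell_n \,\}. \]
I would prove $\supseteq$ first: $k$ pairwise disjoint arcs covering every point give $\tau\le k$ (they pierce everything) while simultaneously witnessing $k$ pairwise disjoint approval sets, forcing $\tau\ge k$, so $\tau=k$. For $\subseteq$ I use the length hypothesis: if $\tau=k$, the $k$ covering arcs have total length at most $kp$, so every $A_i=[\ell_i,\ell_i+p]$ lies within a covering arc extended by $p$, whence $\bigcup_i A_i$ has length at most $2kp<1$ and cannot be the whole circle. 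Thus $S$ is linear-equivalent; cutting at an uncovered point and applying \Cref{alg:linear} (via \Cref{LinearAlgorithm}) produces exactly $\tau=k$ pairwise disjoint arcs, whose union contains all left endpoints by \Cref{linUniqueness} (applicable because equal-length arcs admit no containment, almost surely). \Cref{linUniqueness} also yields \emph{uniqueness} of this witnessing collection, so the right-hand event is a disjoint union over the $\binom{n}{k}$ candidate witness sets.

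Finally comes the probability computation. By exchangeability, $P(\tau=k) = \binom{n}{k}\,P(A_1,\dots,A_k \text{ pairwise disjoint and } \ell_{k+1},\dots,\ell_n \in A_1\cup\cdots\cup A_k)$. Conditioning on $\ell_1,\dots,\ell_k$, whenever these arcs are disjoint their union has measure exactly $kp$, so each of the remaining $n-k$ independent points lands in it with probability $kp$, contributing $(kp)^{n-k}$. The disjointness probability is the chance that $k$ uniform points on the circle have all cyclic gaps at least $p$; since the spacings are distributed uniformly on the simplex $\{\sum g_i = 1,\ g_i\ge 0\}$, the substitution $g_i = p+h_i$ maps the favorable region onto the scaled simplex $\{\sum h_i = 1-kp\}$ of relative $(k-1)$-dimensional volume $(1-kp)^{k-1}$. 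Multiplying the factors gives $\binom{n}{k}(1-kp)^{k-1}(kp)^{n-k}$; as a sanity check, $k=1$ recovers the classical probability $np^{n-1}$ that $n$ points lie in a common arc of length $p$.

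I expect the main obstacle to be the $\subseteq$ direction of the equivalence: correctly leveraging the bound $p<\tfrac{1}{2k}$ to force linear-equivalence (which fails in the complementary regime, where circular societies can have $\tau$ exceeding the packing number), and importing the uniqueness of the disjoint cover from \Cref{linUniqueness} so that the $\binom{n}{k}$ witness-events are genuinely disjoint. A secondary technical point is dispatching measure-zero coincidences (ties among the $\ell_i$, or a point landing on an arc boundary) so that the general-position hypotheses hold with probability one.
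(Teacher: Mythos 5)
Your proposal is correct and follows essentially the same route as the paper's proof: the same key equivalence (for $p < \frac{1}{2k}$, $\tau(S)=k$ if and only if there is a unique collection of $k$ pairwise disjoint arcs whose union contains all left endpoints, established via the $2kp<1$ length bound forcing linear-equivalence, then \Cref{alg:linear} and \Cref{linUniqueness}), the same decomposition of $\{\tau(S)=k\}$ into $\binom{n}{k}$ pairwise disjoint witness events, and the same conditional factorization $(1-kp)^{k-1}(kp)^{n-k}$. The only difference is cosmetic: you derive the disjointness probability $(1-kp)^{k-1}$ directly from the uniform-spacings simplex argument, whereas the paper cites Solomon \cite{solomon}.
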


We prove \Cref{randomsociety_tauk} in \Cref{section:probabilityproofs}.  
While \Cref{question:tauk} largely remains open for general values of $n$, $k$, and $p$, we are able to give a complete answer for some special cases, when $n$ is small.
We present results for these special cases in  \Cref{section:specialcases}.  In \Cref{section:simulationresults}, we discuss simulation results, including those that suggest that \Cref{randomsociety_tauk} might apply for a greater range of values of $p$.  Throughout, we use the observations summarized in the remark below.

\begin{remark}\label{RightLeft} 
Let $\S = (X, \A)$ be a random fixed-length circular society with $n$ voters, approval set length $p$, and approval sets $A_i = [\ell_i, r_i]$ as defined in \Cref{defn:randomsociety}.  Since $\ell_1, \ldots, \ell_n$ are independent random variables that are uniform on $\mathbb{R}/ \mathbb{Z} = [0, 1)$, each $\ell_i$ has probability density function $f_{\ell_{i}}(x) = \mathbf{1}_{[0, 1)}(x)$.  This means that
\begin{enumerate}
    \item given any interval $I \subseteq \mathbb{R}/ \mathbb{Z}$, the probability that $\ell_i \in I$ is equal to the length of $I$, and
    \item the approval sets $A_1, \ldots, A_n$ are distinct with probability 1.
\end{enumerate}
\end{remark}

\subsection{Proofs of Piercing Probabilities} \label{section:probabilityproofs}

Our main goal in this section is to prove  \Cref{randomsociety_tauk}, which gives the probability that $\tau(\S)=k$ when the length of approval sets satisfies $p < \frac{1}{2k}$.
The proof relies on \Cref{thm:solomon} and also on \Cref{lemma:disjointkPiercingSets}, below.


\begin{lemma}\label{lemma:disjointkPiercingSets}
Let $\S = (X, \A)$ be a fixed-length circular society with $n$ distinct approval sets of length $p < \frac{1}{2k}$. Then $S$ has a piercing number $\tau(\S) = k$ if and only if there exists a unique collection of $k$ pairwise-disjoint approval sets in $\A$ whose union contains the left endpoints of all other approval sets.
\end{lemma}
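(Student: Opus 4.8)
The plan is to prove the two directions separately, leaning on the machinery already established for linear societies—specifically \Cref{LinearAlgorithm} and \Cref{linUniqueness}—together with a geometric argument that exploits the hypothesis $p < \frac{1}{2k}$ to reduce the circular problem to a linear one. The reverse direction will only need the \emph{existence} of the collection, while the forward direction is where both the length hypothesis and the uniqueness claim do their work.

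For the reverse direction, I would assume a collection $\mathcal{C} = \{A_1, \ldots, A_k\}$ of $k$ pairwise-disjoint approval sets whose union contains every left endpoint, and show $\tau(S) = k$. The lower bound $\tau(S) \ge k$ is immediate: the sets of $\mathcal{C}$ are pairwise disjoint, so any representative candidate set must contain a distinct point in each. For the matching upper bound I would verify that the right endpoints $\{r_1, \ldots, r_k\}$ of the sets in $\mathcal{C}$ pierce everything. The key computation is that if $B = [\ell_B, \ell_B + p]$ has $\ell_B \in A_i = [\ell_i, \ell_i + p]$, then $\ell_i \le \ell_B \le \ell_i + p = r_i \le \ell_B + p$, so $r_i \in B$; since every left endpoint lies in some $A_i$, each approval set is pierced by some $r_i$, giving $\tau(S) \le k$.

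For the forward direction, I would assume $\tau(S) = k$ and fix a minimum representative candidate set $\{x_1, \ldots, x_k\}$ of distinct points. These partition the circle into $k$ gaps whose lengths sum to $1$, so some gap $(x_i, x_{i+1})$ has length at least $\frac{1}{k} > 2p$. I claim the open sub-arc of points lying more than distance $p$ from both $x_i$ and $x_{i+1}$—nonempty precisely because the gap exceeds $2p$—is covered by no approval set: any length-$p$ set containing such a point would lie entirely inside the open gap $(x_i, x_{i+1})$ and thus contain none of the piercing points, contradicting that $\{x_1, \ldots, x_k\}$ is a representative candidate set. Cutting at such an uncovered point shows $S$ is linear-equivalent, so I can unroll it to a linear society $S'$ with $\tau(S') = k$. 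Since all approval sets have equal length $p < 1$ and are distinct, no set contains another, so \Cref{LinearAlgorithm} and \Cref{linUniqueness} apply: \Cref{alg:linear} produces a minimum piercing set of size $k$, and the associated sets form the unique collection of pairwise-disjoint approval sets whose union contains all remaining left endpoints. Because the cut point lies in no approval set, circular and linear disjointness coincide and the left endpoints are unchanged, so both the collection and its uniqueness transfer back to $S$.

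The main obstacle is the forward direction's reduction to the linear case: everything hinges on producing an uncovered cut point, and this is exactly where $p < \frac{1}{2k}$ is essential, as it is the precise threshold forcing a gap wider than $2p$ between consecutive piercing points and hence an uncovered arc. I would take care that the endpoint and disjointness computations, though written on the line, are valid on $\R/\Z$ for arcs of length $p < \frac{1}{2}$, and that the uniqueness really transfers across the cut rather than being an artifact of the particular cut point chosen.
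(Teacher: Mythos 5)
Your proposal is correct and follows essentially the same route as the paper: the reverse direction uses disjointness for the lower bound and the right endpoints (via the fixed-length computation) as a piercing set, and the forward direction reduces to the linear case by producing an uncovered cut point, then invokes \Cref{LinearAlgorithm} and \Cref{linUniqueness} and transfers the unique collection back to the circle. The only (immaterial) difference is how the uncovered point is found—you apply pigeonhole to the $k$ gaps between consecutive piercing points, while the paper bounds the total length of the unions of sets pierced by each point by $k \cdot 2p < 1$; both hinge on $p < \frac{1}{2k}$ in the same way.
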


\begin{proof}[Proof of \Cref{lemma:disjointkPiercingSets}]
Consider a fixed-length circular society $\S = (X, \A)$ with distinct approval sets $A_1, \ldots, A_n$ of length $p < \frac{1}{2k}$.  We follow \Cref{defn:randomsociety}, denoting $A_i = [\ell_i, r_i]$, where $r_i = \ell_i + p$. 

First, suppose there exists a unique collection of $k$ pairwise-disjoint approval sets $A_1, \ldots, A_k$ whose union contains the left endpoints of all other approval sets.  This means that each of the remaining $n-k$ approval sets contains one of the right endpoints of $A_1, \ldots, A_k$.  Thus, the set of the right endpoints of $A_1, \ldots, A_k$ is a piercing set of size $k$.  Since $A_1, \ldots, A_k$ are pairwise disjoint, a minimum piercing set of $\S$ must contain at least $k$ points.  Therefore, $\tau(\S) = k$.

Conversely, suppose $\tau(\S) = k$.  Then, there is a minimum piercing set $W = \{x_1,\ldots, x_k\}$ of $\S$.  For each $i = 1, \ldots, k$, let $\mathcal{S}_i$ denote the collection of all approval sets pierced by $x_i$.  Let $L_{i}$ denote the leftmost left endpoint among sets in $\mathcal{S}_i$ ($i=1, \ldots, k$) and let $R_{i}$ denote the rightmost right endpoint among sets in $\mathcal{S}_i$.  Since each set in $\mathcal{S}_i$ is pierced by $x_i$,  then $L_{i}$ and $R_{i}$ are each within distance $p$ from $x_i$.  Therefore, $R_{i} - L_{i} \leq 2p < \frac{1}{k}$.  So, the union of all approval sets in each $\mathcal{S}_i$ is a closed interval of length strictly less than $\frac{1}{k}$.

This means that the union of all sets in $\S$ is a union of closed intervals whose total length is strictly less than $k \frac{1}{k} = 1$, which implies that not all points on the circle are covered by the approval sets of $\S$.  Therefore, $\S$ is essentially a linear society: We can choose a point $x$ not contained in any approval set; then, we can ``cut'' the spectrum at $x$ and ``unroll'' it to produce a linear society.  Note that choosing a different point $x$ may have the effect of a cyclic permutation of the approval sets in the $S$, but does not affect the results of this theorem. We treat $\S$ as a linear society in the remainder of the proof.

For general collections of arbitrary sets, the task of finding minimum piercing sets is computationally hard.  However, for linear societies, there is a simple linear-time algorithm that finds a minimum piercing set \cite[Section 1.4]{aamahmood}: Start by ordering the approval sets by  their right endpoints.  Find the approval set with the leftmost right endpoint, $r$, and add $r$ into the piercing set; then, remove all approval sets pierced by $r$.  Repeat with the remaining sets until all sets are pierced.  The collection of all right endpoints chosen in this way is a minimum piercing set.  In addition, the approval sets that correspond to these right endpoints are pairwise disjoint.

Since we can treat $\S$ as a linear society, we can apply this algorithm to $\S$, to produce a minimum piercing set, call it $W'$.  The set $W'$ might be different than the minimum piercing set $W$ we start with, but it must also contain $k$ points.  Further, $W'$ consists of $k$ \emph{right endpoints}, call them $r_1, \ldots, r_k$ (with $r_1 < \ldots < r_k$), the right endpoints of approval sets $A_1 = [\ell_1, r_1], \ldots, A_k = [\ell_k, r_k]$.  The collection of approval sets $\{A_1, \ldots, A_k\}$ consists of $k$ pairwise disjoint sets.

Next, we show that the union, $\displaystyle \cup_{i=1}^k A_i$, contains all other left endpoints: For each $i = 1, \ldots, k$, let $\mathcal{T}_i$ be the collection of approval sets that contain $r_i$ but not $r_j$ for any $j < i$.
Since $A_1, \ldots, A_k$ are pairwise disjoint, then $A_i \in \mathcal{T}_i$.  Since $\S$ is a fixed-length society with distinct approval sets, no approval set is contained in another; therefore, any other approval set in $\mathcal{T}_i$ must have its left endpoint in $A_i$.  Since $r_1, \ldots, r_k$ form a piercing set, every approval set is in one of the $\mathcal{T}_i$.  Thus, $A_1, \ldots, A_k$ collectively contains the left endpoints of all approval sets in the society.

Finally, it remains to show that $\{A_1, \ldots, A_k\}$ is the \textit{unique} subcollection of $k$ pairwise disjoint approval sets that contain the left endpoints of the remaining $n-k$ approval sets. First,  $A_1$ must be the leftmost approval set in $S$, since no other approval set contains its left endpoint.  Likewise, $A_2$ must be the leftmost approval set that is disjoint from $A_1$.   Similarly, $A_i$ must be the leftmost approval set that is disjoint from $A_1 \cup A_2 \cup \cdots \cup A_{i-1}$.
This results in a unique collection of approval sets $A_1, \ldots, A_k$ that contain the left endpoints of all remaining approval sets.


\end{proof}

We are now ready to prove \Cref{randomsociety_tauk}.

\begin{proof}[Proof of \Cref{randomsociety_tauk}]
Let $\S = (X, \A)$ be a random fixed-length circular society with approval sets $A_1$, $\dots$, $A_n$ of length $p<
\frac{1}{2k}$. Denote $A_i = [\ell_i, r_i] = [\ell_i, \ell_i  + p]$.  We can assume that these approval sets are distinct, which occurs with probability 1 (\Cref{RightLeft}).

Let $E$ denote the event that $\tau(\S) =k$.  By \Cref{lemma:disjointkPiercingSets}, $E$ is the event that there exists a unique collection of $k$ pairwise disjoint approval sets whose union contains the left endpoints of the remaining $n-k$ sets. Our goal is to compute $P(E)$, by expressing $E$ as a union of disjoint events.

Suppose $T$ is a subset of $\{1, \ldots, n\}$ of cardinality $k$.  Let $D_{T}$ denote the event that the approval sets $\{A_i\}_{i \in T}$ are pairwise disjoint.  Let $C_{T}$ be the event that $\bigcup_{i \in T}A_i$ contains the left endpoints of the remaining $n-k$ approval sets.  Then, $C_{T} \cap D_{T}$ denotes the event that $\{A_i\}_{i \in T}$ is pairwise disjoint and the union $\bigcup_{i \in T} A_i$ contains that left endpoints of the remaining sets. The uniqueness of the collection of $k$ pairwise disjoint sets specified in \Cref{lemma:disjointkPiercingSets} implies that the events $\left\{C_{T} \cap D_{T}\right\}_{T \subseteq \{1, \ldots, n\}, \ |T| = k}$ are pairwise disjoint.

Thus, $E$ is the union of the disjoint events $\left\{C_{T} \cap D_{T}\right\}_{T \subseteq \{1, \ldots, n\}, \ |T| = k}$, which implies that
\[P(E) = P\left(\bigcup_{T \subseteq \{1, \ldots, n\}, \ |T|=k }  (C_{T}\cap D_{T})\right) = \sum_{T \subseteq \{1, \ldots, n\}, \ |T| = k }  P(C_{T} \cap D_{T}). \]

It remains to compute $P(C_{T} \cap D_{T})= P(D_{T}) P(C_{T} | D_{T})$. 
\Cref{thm:solomon} shows that the probability that $k$ arcs of length $p < \frac{1}{k}$, randomly placed on the circle, are pairwise disjoint is $(1-kp)^{k-1}$.
Thus, $P(D_{T}) = (1-kp)^{k-1}$.
Next, we compute $P(C_{T} | D_{T})$. Given that $k$ approval sets $\{A_i\}_{i \in T}$ are disjoint, the length of $\bigcup_{i \in T} A_i$ is $kp$.  Hence, the probability that the left endpoints of the remaining $n-k$ approval sets are in $\bigcup_{i \in T} A_i$ is $(kp)^{n-k}$. Therefore, $P(C_{T} \cap D_{T}) = P(D_{T}) P(C_{T} | D_{T}) = \left(1-kp\right)^{k-1}(kp)^{n-k}$.  

Since the number of possible combinations of $k$ disjoint approval sets is $\binom{n}{k}$, the probability that $\tau(\S) = k$ is
\begin{align*}
P(E) & = \sum_{T \subseteq  \{1, \ldots, n\}, |T| = k}  P(C_{T} \cap D_{T})  = \binom{n}{k} (1-kp)^{k-1} (kp)^{n-k}. 
\end{align*}
\end{proof}

\subsection{Proofs of special cases} \label{section:specialcases}

The proof of \Cref{lemma:disjointkPiercingSets} relies on the fact that a fixed-length circular society $\S$ with $p < \frac{1}{2k}$ and $\tau(\S) = k$ is equivalent to a linear society.  If this is not the case, then $\S$ might not have a unique collection of $k$ disjoint approval sets that contain the left endpoints of all other approval sets.
This complicates the task of finding the probability distribution of $\tau(\S)$ when $p \geq \frac{1}{2k}$. However, when a society $S$ has a small number of voters we can determine the probability of each possible piercing number regardless of approval set length. 


For example, when a society has  $n = 2$ voters, \Cref{pnumless} and \Cref{randomsociety_tauk} allow us to completely describe the distribution of piercing numbers for all possible approval set lengths $p$. We state this in the following corollary.

\begin{corollary}\label{prop:n2}
For a random fixed-length circular society $\S$ with $n=2$ voters and approval set length $p$,
    \begin{align*}
    P(\tau(\S) = 1) & = \begin{cases}
        2(1-p)p & \text{ if } p \in [0, 1/2), \\
        1 & \text{ if } p \in [1/2, 1],
        \end{cases}\\
    P(\tau(\S) = 2) & = \begin{cases}
        1-2p(1-p) & \text{ if } p \in [0, 1/2), \\
        0 & \text{ if } p \in [1/2, 1].
        \end{cases}
    \end{align*}
\end{corollary}

\begin{proof}

We start by computing the probability that $\tau(\S) = 1$.  By \Cref{pnumless}, when $n = 2$ and $p \geq 1/2$, the piercing number must be 1.  So, $P(\tau(\S) = 1) = 1$.  Next consider the case when $p < 1/2$.  We can apply \Cref{randomsociety_tauk} to compute the probability that the piercing number is $k = 1$: 
\[P(\tau(\S) = 1) = \binom{2}{1} (1-p)p = 2p(1-p). \]
Finally, if the piercing number is not 1, it must be 2.  So, $P(\tau(\S) = 2) = 1 - P(\tau(\S) = 1)$.
\end{proof}

As $n$ increases, however, \Cref{pnumless} and \Cref{randomsociety_tauk} only give us piercing number probabilities for either small or large approval set lengths. 
For example, restricting our attention to the probability that $\tau(\S) = 1$, \Cref{randomsociety_tauk} states that if $p < \frac{1}{2}$, then $P(\tau(\S) = 1) = np^{n-1}$.  \Cref{pnumless} states that $\tau(\S) = 1$ for any fixed-length circular society $S$ with approval set length $p\geq \frac{n-1}{n}$.   Thus, when a society consists of more than $n = 2$ voters, it remains to find $P(\tau(\S) = 1)$ when the approval set length is $\frac{1}{2} \le p < \frac{n-1}{n}$. 

\Cref{prop:n3tau1} gives $P(\tau(\S) = 1)$ for societies with $3$ voters and approval set length $\frac{1}{2} \leq p < \frac{2}{3}$. \Cref{fig:distTau1} then shows this probability as a function of the approval set length $p$. This is the last piece of the puzzle needed for a complete picture of the piercing number probabilities for a society with $3$ voters, which we state in \cref{prop:n3}.

\begin{lemma}\label{prop:n3tau1}
  For a random fixed-length circular society $S$ with $n=3$ voters and approval set length $\frac{1}{2} \le p < \frac{2}{3}$, $P(\tau(\S) = 1) = -9p^2 + 12p - 3$.
\end{lemma}

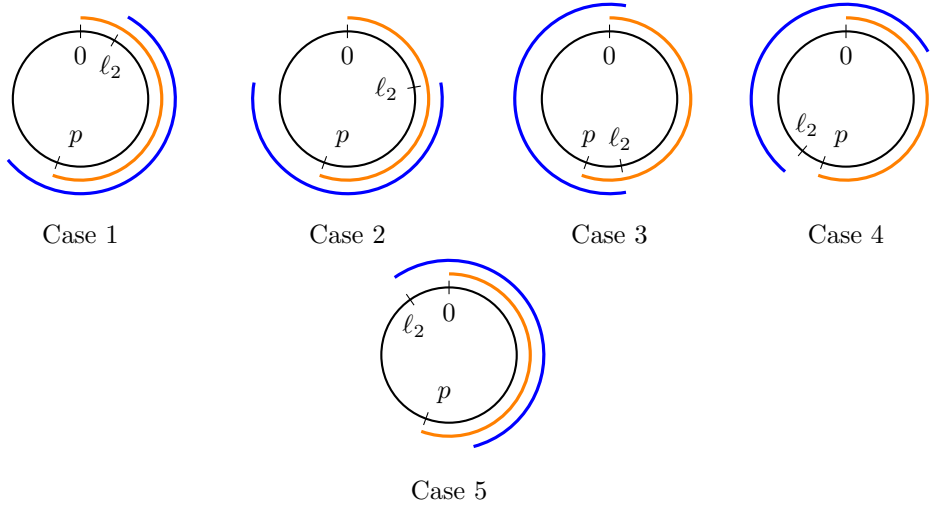
\begin{figure}[ht]
  \begin{center}
    \begin{tikzpicture}[scale=0.9]
      \draw[thick] (0,0) circle (1);
      \draw[orange, very thick] (90:1.2) arc (90:-110:1.2);
      \draw[blue, very thick] (60:1.4) arc (60:-140:1.4);
      \draw (0,0.9) node[below] {$0$} -- (0,1.1);
      \draw (-110:0.9) node[above right] {$p$} -- (-110:1.1);
      \draw (60:0.9) node [below] {$\ell_2$} -- (60:1.1);
      \node at (0,-2) {Case 1};
    \end{tikzpicture}
    \hspace{15pt}
    \begin{tikzpicture}[scale=0.9]
      \draw[thick] (0,0) circle (1);
      \draw[orange, very thick] (90:1.2) arc (90:-110:1.2);
      \draw[blue, very thick] (10:1.4) arc (10:-190:1.4);
      \draw (0,0.9) node[below] {$0$} -- (0,1.1);
      \draw (-110:0.9) node[above right] {$p$} -- (-110:1.1);
      \draw (10:0.9) node [left] {$\ell_2$} -- (10:1.1);
      \node at (0,-2) {Case 2};
    \end{tikzpicture}
    \hspace{15pt}
    \begin{tikzpicture}[scale=0.9]
      \draw[thick] (0,0) circle (1);
      \draw[orange, very thick] (90:1.2) arc (90:-110:1.2);
      \draw[blue, very thick] (80:1.4) arc (80:280:1.4);
      \draw (0,0.9) node[below] {$0$} -- (0,1.1);
      \draw (-110:0.9) node[above] {$p$} -- (-110:1.1);
      \draw (280:0.9) node [above] {$\ell_2$} -- (280:1.1);
      \node at (0,-2) {Case 3};
    \end{tikzpicture}    \hspace{15pt}
    \begin{tikzpicture}[scale=0.9]
      \draw[thick] (0,0) circle (1);
      \draw[orange, very thick] (90:1.2) arc (90:-110:1.2);
      \draw[blue, very thick] (30:1.4) arc (30:230:1.4);
      \draw (0,0.9) node[below] {$0$} -- (0,1.1);
      \draw (-110:0.9) node[above right] {$p$} -- (-110:1.1);
      \draw (230:0.9) node [above] {$\ell_2$} -- (230:1.1);
      \node at (0,-2) {Case 4};
    \end{tikzpicture}    \hspace{15pt}
    \begin{tikzpicture}[scale=0.9]
      \draw[thick] (0,0) circle (1);
      \draw[orange, very thick] (90:1.2) arc (90:-110:1.2);
      \draw[blue, very thick] (-75:1.4) arc (-75:125:1.4);
      \draw (0,0.9) node[below] {$0$} -- (0,1.1);
      \draw (-110:0.9) node[above right] {$p$} -- (-110:1.1);
      \draw (125:0.9) node [below] {$\ell_2$} -- (125:1.1);
      \node at (0,-2) {Case 5};
    \end{tikzpicture}  
  \end{center}
  \caption{The five cases in the proof of \Cref{prop:n3tau1}. $A_1 = [0,p]$ is drawn in orange, and $A_2 = [\ell_2,\ell_2+p]$ in blue. $A_3$ is not shown, but $A_3$ \emph{must} intersect $A_1 \cap A_2$ in Cases 1, 3, and 5;  $A_3$ \emph{might} intersect $A_1 \cap A_2$ in Cases 2 and 4.}
    \label{fig:n3tau1proof}
\end{figure}

\begin{proof}
  Without loss of generality, let $A_1 = [0,p]$.
  We consider five disjoint cases for the position of $A_2 = [\ell_2,\ell_2+p]$, as illustrated in \Cref{fig:n3tau1proof}.
  
  \begin{enumerate}[label=Case \arabic*:,left=20pt]
      \item Let $E_1$ be the event that $0 < \ell_2 < 2p-1$. In this case, $A_1 \cap A_2$ is an interval of length greater than $1-p$, so $A_3$ must intersect $A_1 \cap A_2$. Thus, $P((\tau = 1) \cap E_1) = P(E_1) = 2p-1$.
      \item Let $E_2$ be the event that $2p-1 < \ell_2 < 1-p$. In this case, $A_1 \cap A_2$ is an interval of length $p-\ell_2$, which is less than $1-p$. The probability that $E_2$ occurs and $A_3$ intersects $A_1 \cap A_2$ is
        \[ P((\tau = 1) \cap E_2) = \int_{2p-1}^{1-p} (2p-\ell_2)\ d\ell_2 = \frac{3}{2}\left( 2p - 3p^2 \right). \]
      \item Let $E_3$ be the event that $1-p < \ell_2 < p$. In this case, $A_1 \cap A_2$ is a union of two disconnected intervals: $A_1 \cap A_2 = [0,\ell_2+p-1] \cup [\ell_2, p]$. Since one of these intervals contains $0$ and the other contains $\frac{1}{2}$, $A_3$ \emph{must} intersect $A_1 \cap A_2$. Thus, $P((\tau = 1) \cap E_3) = P(E_3) = 2p-1$.
      \item Let $E_4$ be the event that $p < \ell_2 < 2-2p$. In this case, $A_1 \cap A_2$ is an interval of length $\ell_2 + p - 1$, which is less than $1-p$. The probability that $E_4$ occurs and $A_3$ intersects $A_1 \cap A_2$ is
        \[ P((\tau = 1) \cap E_4) = \int_{p}^{2-2p} (2p-1+\ell_2)\ d\ell_2 = \frac{3}{2}\left( 2p - 3p^2 \right). \]
      \item Let $E_5$ be the event that $2-2p < x < 1$. In this case, $A_1 \cap A_2$ is an interval of length greater than $1-p$. As in Case 1, $(A_1 \cap A_2) \cap A_3 \ne \emptyset$, so $P((\tau = 1) \cap E_5) = P(E_5) = 2p-1$.
  \end{enumerate}
  
  By the law of total probability, 
  \[ P(\tau = 1) = \sum_{i=1}^5 P((\tau = 1) \cap E_i) = 3(2p-1) + 2\cdot\frac{3}{2}\left( 2p - 3p^2 \right) = -9p^2 + 12p - 3. \]
\end{proof}

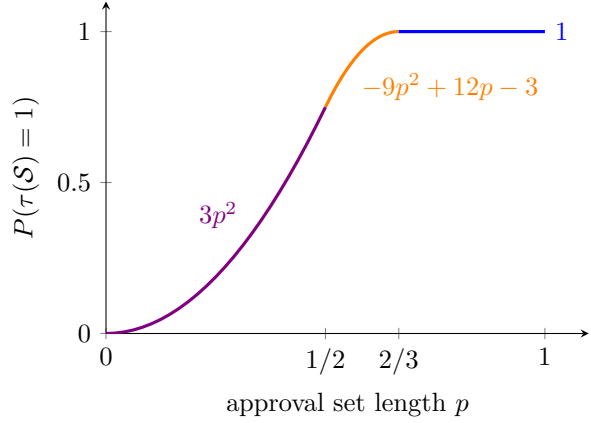
\begin{figure}[htb]
    \centering
    \begin{tikzpicture}
      \begin{axis}[width=8cm, height=6cm, axis y line = left, axis x line = bottom, xtick={0,0.5,0.667,1}, xticklabels={$0$,$1/2$,$2/3$,$1$}, xmin=0, xmax=1.1, xlabel={approval set length $p$}, ymin=0, ymax=1.1, ylabel={$P(\tau(\S)=1)$}]
        \addplot[color=violet, very thick, domain=0:0.5]{3*x*x} node[above left, pos=0.5] {$3p^2$};
        \addplot[color=orange, very thick, domain=0.5:0.667]{-9*\x*\x + 12*\x - 3} node[below right, pos=0.5] {$-9p^2 +12p -3$};
        \addplot[color=blue, very thick, domain=0.667:1]{1} node[right] {$1$};
      \end{axis}
    \end{tikzpicture}
    \caption{For fixed-length circular societies with $n=3$ voters, this plot shows $P(\tau(\S) = 1)$ as a function of the approval set length $p$. The purple curve for $0 < p < \frac{1}{2}$ is given by \Cref{randomsociety_tauk}, the orange curve for $\frac{1}{2} \le p < \frac{2}{3}$ is given by \Cref{prop:n3tau1}, and the blue line for $\frac{2}{3} \le p < 1$ by \Cref{pnumless}.}
    \label{fig:distTau1}
\end{figure}

\begin{corollary}\label{prop:n3}
For a random fixed-length circular society $\S$ with $n=3$,
    \begin{align*}
    P(\tau(\S) = 1) & = \begin{cases}
        3p^2  & \text{ if }p \in [0, 1/2), \\
        -9p^2 + 12p - 3  & \text{ if }p \in [1/2, 2/3), \\
        1  & \text{ if }p \in [2/3, 1],
        \end{cases}\\
    P(\tau(\S) = 2) & = \begin{cases}
    6p - 12 p^2  & \text{ if }p \in [0, 1/3), \\
    1 - 3p^2  & \text{ if }p \in [1/3, 1/2), \\
    9p^2 - 12p + 4  & \text{ if }p \in [1/2, 2/3), \\
    0  & \text{ if }p \in [2/3, 1],
    \end{cases}\\
    P(\tau(\S) = 3) & = \begin{cases}
        (1-3p)^2  & \text{ if }p \in [0, 1/3), \\
        0  & \text{ if }p \in [1/3, 1].
        \end{cases}
    \end{align*}
\end{corollary}

\begin{proof}
We start by computing the probability that $\tau(\S) = 1$.  \Cref{pnumless} tells us that if $p \geq 2/3$, then $P(\tau(\S) = 1) = 1$.  \Cref{randomsociety_tauk} tells us that if $p < 1/2$, then $P(\tau(\S) = 1) = 3p^2$.  Therefore, along with \Cref{prop:n3tau1},
\begin{align*}
    P(\tau(\S) = 1) & = \begin{cases}
    3p^2  & \text{ if } p \in [0, 1/2), \\
    -9p^2 + 12p - 3  & \text{ if } p \in [1/2, 2/3), \\
    1 & \text{ if } p \in [2/3, 1].
    \end{cases}
\end{align*}

Next, we compute the probability that $\tau(\S) = 3$.  \Cref{pnumless} tells us that if $p \geq 1/3$, then $\tau(\S) \leq 2$; this means that $P(\tau(\S) = 3) = 0$ when $p \geq 1/3$.  If $p < 1/3$, then \Cref{thm:solomon} tells us that the probability that three arcs of length $p$ are pairwise disjoint (that is, that the piercing number is 3) is $(1-3p)^2$.  So,
\begin{align*}
    P(\tau(\S) = 3) & = \begin{cases}
    (1-3p)^2  & \text{ if }p \in [0, 1/3), \\
    0  & \text{ if }p \in [1/3, 1].
    \end{cases}
\end{align*}

Finally, $P(\tau(\S) = 2) = 1 - \left( P(\tau(\S) = 1) + P(\tau(\S) = 3) \right)$, and the result follows.  
\end{proof}

Generalizing \Cref{prop:n3tau1} to larger values of $n$ is tedious due to the large number of cases, as the approval sets can intersect in many different ways.
We leave this as a direction for future study.

\subsection{Simulating Societies}\label{section:simulationresults}

We can only compute the exact piercing number probability  for fixed-length circular societies with small approval set length $p$ or small number of voters $n$, however, we can use simulation to explore the probability distribution of piercing numbers of random fixed-length circular societies with larger $p$ and/or larger $n$.
This provides an empirical answer to \cref{question:tauk}.
Given a fixed-length circular society $\S$ with $n$ voters and approval set length $p$, we can estimate the probability that the piercing number is $k$ by randomly generating $N$ such societies and computing the piercing number of each.
The proportion of the $N$ circular societies whose piercing number is equal to $k$ is our estimate of $P(\tau(\S) = k)$.

\Cref{randomsociety_tauk} states that the probability $P(\tau(\S) = k)$ for a random fixed-length circular society with approval set length $p < \frac{1}{2k}$ is given by \Cref{probEq}.
Our simulations suggest that \Cref{probEq} may give the correct probability even when $p$ is somewhat larger than $\frac{1}{2k}$, at least for certain values of $k$.
\Cref{fig:simulation_4sets} compares the simulated probability with the value given by \Cref{probEq} for random fixed-length circular societies with $n=4$ voters, each $k \in \{1, 2, 3, 4\}$, and a range of approval set lengths $p \in (0, 0.8)$.
We see that for each $\tau = k$ the estimated probability agrees with \Cref{probEq} for $p < \frac{1}{2k}$, as it should.
However, for $\tau > 2$, these values agree for $p$ beyond the bound stated in \Cref{randomsociety_tauk}. For $\tau = 3$, the values seem to match well up to $p = \frac{1}{3}$, and for $\tau = 4$, they seem to match well up to $p = \frac{1}{4}$.
This leads us to the following conjecture, which is further supported by additional simulations for larger $n$ and $k$.

\begin{conjecture}\label{taukconjecture}
For a random fixed-length circular society $S$,
\[ P(\tau(\S) = k) = \binom{n}{k} (1-kp)^{k-1}(kp)^{n-k} \]
for $p$ in an interval $(0, \frac{1}{2k} + \delta)$ for some $\delta > 0$.
\end{conjecture}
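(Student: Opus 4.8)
The plan is to locate precisely where the hypothesis $p < \frac{1}{2k}$ enters the proof of \Cref{randomsociety_tauk} and to control the error that appears once it is relaxed. The entire probability computation factors through \Cref{lemma:disjointkPiercingSets}, and the only step in that lemma requiring $p < \frac{1}{2k}$ is the claim that a society with $\tau(S) = k$ must be linear-equivalent: each piercing point $x_i$ pierces only arcs lying in the window $[x_i - p,\, x_i + p]$, so the union of all arcs has length at most $2kp$, which is below $1$ exactly when $p < \frac{1}{2k}$. Everything downstream---Solomon's disjointness probability $(1-kp)^{k-1}$ and the conditional factor $(kp)^{n-k}$---remains valid for all $p < \frac1k$. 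So the task reduces to understanding how the correspondence of \Cref{lemma:disjointkPiercingSets} degrades for $p \ge \frac{1}{2k}$.

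First I would rewrite the target identity in signed form. For $T \subseteq \{1,\dots,n\}$ with $|T| = k$, call $T$ \emph{valid} if the arcs $\{A_i\}_{i\in T}$ are pairwise disjoint and their union contains the left endpoints of the remaining arcs; as in the proof of \Cref{lemma:disjointkPiercingSets}, the existence of a valid $T$ already forces $\tau(S) = k$. Writing $N(S)$ for the number of valid $T$, the formula in \Cref{probEq} equals $\mathbb{E}[N(S)] = \sum_{|T|=k} P(C_T \cap D_T)$, whereas the quantity we want is $P(\tau(S)=k)$. Their difference splits cleanly as
\[
  \mathbb{E}[N(S)] - P(\tau(S)=k) \;=\; \underbrace{\sum_{j\ge 2}(j-1)\,P\!\left(N(S)=j\right)}_{\text{over-counting}} \;-\; \underbrace{P\!\left(\tau(S)=k,\ N(S)=0\right)}_{\text{under-counting}}.
\]
The over-counting term records societies admitting two or more distinct disjoint covering collections (failure of uniqueness), and the under-counting term records societies with $\tau(S)=k$ that are not linear-equivalent and so possess no such collection. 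The goal is to show that both terms vanish, or exactly cancel, on a half-open interval $[\frac{1}{2k},\, \frac{1}{2k}+\delta)$ for a suitable $\delta > 0$.

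The geometric input is that both correction terms live on thin sets. By the window argument above, a society contributing to the under-counting term has its arcs covering the whole circle while their union sits inside $k$ windows of total length $2kp$; for $p$ only slightly above $\frac{1}{2k}$ this forces the windows to tile the circle with overlap at most $2kp-1$, and moreover each window must be \emph{genuinely} covered by its arcs, not merely contained in it. I would try to show this is a positive-codimension event that vanishes at $p = \frac{1}{2k}$, and carry out the analogous analysis for the non-uniqueness event, which likewise demands several simultaneous left-endpoint coincidences. If instead the two terms are individually positive but equal, the fallback is to construct a measure-preserving involution on configuration space exchanging an over-counted society (with $N(S)\ge 2$) for an under-counted one, so that the signed difference cancels termwise.

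The main obstacle is that these corrections need not vanish, and do not always cancel: \Cref{prop:n3tau1} shows that for $k=1$, $n=3$ the over-counting term equals $3(2p-1)^2$ while the under-counting term is zero (a society with $\tau(S)=1$ always has its leftmost arc contain every left endpoint), so the formula already fails for every $p > \frac12 = \frac{1}{2k}$, forcing $\delta = 0$ there. Thus any proof must explain why $k \ge 2$ (or perhaps $k \ge 3$, matching the simulations) behaves differently---presumably because both the non-uniqueness and the circle-covering events require strictly more simultaneous coincidences among the $n$ independent left endpoints as $k$ grows, and so first occur at some $p$ strictly above $\frac{1}{2k}$. Pinning down this threshold as a function of $k$ and $n$, and proving the two corrections vanish or cancel below it, is the crux; identifying the largest admissible $\delta$ (the simulations suggest $\delta$ may be as large as $\frac{1}{2k}$, so that the formula persists up to the natural barrier $p = \frac1k$) would settle the conjecture.
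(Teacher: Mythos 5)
First, a point of order: the statement you were asked to prove is a \emph{conjecture} in the paper. The authors do not prove it; they support it only with simulation evidence (\Cref{fig:simulation_4sets}, \Cref{table:simulation_results}) and explicitly leave the admissible range of $p$ open (\Cref{question:bound_on_p}). So there is no proof in the paper to compare yours against, and your proposal---which candidly labels its final step as an unresolved ``crux''---is likewise not a proof but a program. Judged as a proof, it has exactly the gap you name yourself: you never show that the over-counting and under-counting corrections vanish or cancel on any interval $\left[\frac{1}{2k}, \frac{1}{2k}+\delta\right)$, and nothing in the paper supplies this step either.

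That said, your analysis is correct as far as it goes, and it establishes something sharper than anything the paper says explicitly: the conjecture as literally stated (for every $k$, including $k=1$) is \emph{false}. Your decomposition is sound: the direction of \Cref{lemma:disjointkPiercingSets} that does not require $p < \frac{1}{2k}$ shows that any valid collection of $k$ disjoint arcs covering the remaining left endpoints forces $\tau(S)=k$, so $\binom{n}{k}(1-kp)^{k-1}(kp)^{n-k}$ equals $\mathbb{E}[N(S)]$ for all $p<\frac{1}{k}$, and its discrepancy with $P(\tau(S)=k)$ is over-counting minus under-counting. For $k=1$, $n=3$, the under-counting term vanishes (if all three arcs share a point $x$, each left endpoint lies within $p$ counterclockwise of $x$, and the arc whose left endpoint is farthest from $x$ contains all the others), so the discrepancy is pure over-counting, and comparison with \Cref{prop:n3tau1} gives $3p^2 - (-9p^2+12p-3) = 3(2p-1)^2 > 0$ for every $p>\frac{1}{2}$. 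Hence no $\delta>0$ exists when $k=1$, $n=3$: the paper's own \Cref{prop:n3tau1} outright falsifies the unrestricted statement, a contradiction the authors never acknowledge even though their simulations only support an extended range for $k \ge 3$. The honest conclusion of your work is therefore not a proof of \Cref{taukconjecture} but a demonstration that it must be restated (restricted to $k\ge 2$, or $k\ge 3$ per the simulations) before your program---or any other---could hope to establish it.
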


\begin{figure}[ht]
    \centering
    \includegraphics[width=11cm]{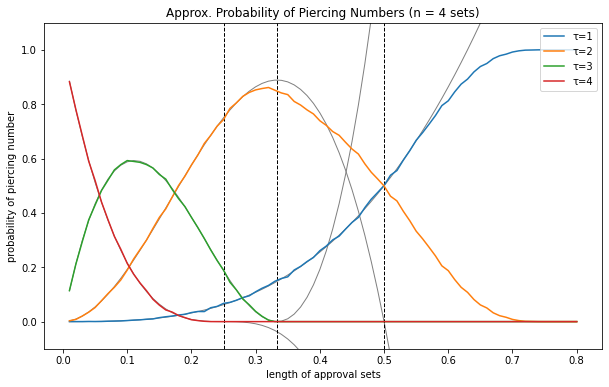}
    \caption{Comparison of simulated probabilities $P(\tau(\S)=k)$ with those given by \Cref{randomsociety_tauk} for each $k = 1, \ldots, 4$ in fixed-length circular societies with $n = 4$ voters and approval set lengths $0 \leq p \leq 0.8$.
    Simulated probabilities are plotted in color; polynomial curves given by \Cref{probEq} are plotted in gray.
    The simulated probabilities suggest that \Cref{probEq} gives the correct probability for $p$ somewhat larger than $\frac{1}{2k}$. The dashed vertical lines highlight $p=\frac{1}{4}$, $p=\frac{1}{3}$, and $p=\frac{1}{2}$. Note that when $\tau=3$ the formula seems to hold for $p \le \frac{1}{3}$, and when $\tau = 4$ it seems to hold for $p \le \frac{1}{4}$. }
    \label{fig:simulation_4sets}
\end{figure}

Furthermore, \Cref{table:simulation_results} displays estimated probabilities $P(\tau(\S) = k)$ for certain values of $p$ greater than $\frac{1}{2k}$.
For each combination of $n$, $k$, and $p$ listed in the table, we randomly generated $N=100{,}000$ fixed-length circular societies and computed the piercing number for each. The table shows that the simulated probabilities $P(\tau(\S)=k)$ are close to the values given by \Cref{probEq} and \Cref{taukconjecture}. Hence, there is evidence that the upper bound on $p$ should be greater than $\frac{1}{2k}$. However, we do not yet know the exact upper bound on $p$ at which \Cref{taukconjecture} fails to hold. We state this as a open question.

\begin{table}[!ht]
  \begin{center}
    \begin{tabulary}{3.3in}{RRRRR}
      \toprule
      $n$ & $p$ & \ $k$\ & estimated $P(\tau(\S) = k)$ & probability given by \Cref{taukconjecture} \\
      \midrule
      5 & 0.15 & 4 & 0.1903 & 0.1920 \\
      5 & 0.15 & 5 & 0.0038 & 0.0039 \\
      \midrule
      8 & 0.12 & 5 & 0.3102 & 0.3040 \\
      8 & 0.12 & 6 & 0.0254 & 0.0250 \\
      8 & 0.12 & 7 & 0.0001 & 0.0001 \\
      \midrule
      10 & 0.10 & 5 & 0.4896 & 0.4922 \\
      10 & 0.10 & 6 & 0.2808 & 0.2787 \\
      10 & 0.10 & 7 & 0.0309 & 0.0300 \\
      10 & 0.10 & 8 & 0.0004 & 0.0004 \\
      \bottomrule
    \end{tabulary}
  \end{center}
  \caption{Estimated piercing number probabilities from simulation compared with   probabilities given by \Cref{taukconjecture}. Results are rounded to four decimal places; simulated results of zero are not shown.}
  
  \label{table:simulation_results}
\end{table}

\begin{question}\label{question:bound_on_p}
What is the largest value of $\delta$ for which \Cref{taukconjecture} gives the probability that $\tau(\S) = k$? What is the relationship between $\delta$, the number of voters $n$, and the piercing number $k$?
\end{question}

Our probabilistic approach also helps us understand the ``average-case scenario'' for piercing numbers of fixed-length circular societies. If $\S$ is a fixed-length circular society with $n$ voters and approval set length  $p < \frac{1}{2n}$, then \Cref{randomsociety_tauk} holds for all $k \in \{1, \ldots, n\}$, and the expected value of $\tau(\S)$ is:
\begin{equation}\label{expValEq}
  E[ \tau(\S) ] = \sum_{k = 1}^{n} k \binom{n}{k} (1-kp)^{k-1}(kp)^{n-k}.
\end{equation}
However, given \Cref{taukconjecture}, the formula above may hold for $p$ somewhat larger than $\frac{1}{2n}$.
We simulated the average piercing number in random fixed-length circular societies with $p \ge \frac{1}{2n}$ and provide selected results  in \Cref{table:expectedvalues}.
We see that that the estimated average piercing number is close to the value given by \Cref{expValEq}.

\begin{table}[!ht]
    \centering
    \begin{tabulary}{3.5in}{RRRR} 
    \toprule
      $n$ &     $p$ &  estimated average piercing number &  expected value given by \Cref{expValEq} \\
    \midrule
      5 &  0.100 &   3.494 &    3.492 \\
      5 &  0.200 &   2.633 &    2.632 \\
      5 &  0.250 &   2.340 &    2.324 \\
    \midrule
     25 &  0.050 &   11.207 &    11.222 \\
     25 &  0.100 &   7.198 &     7.201 \\
     25 &  0.125 &   6.099 &     5.986 \\
    \midrule
     45 &  0.020 &    23.823 &   23.816 \\
     45 &  0.050 &    13.916 &   13.912 \\
     45 &  0.080 &     9.822 &    9.816 \\
    \bottomrule
    \end{tabulary}
    \caption{For each combination of values of $n$ and $p$, we simulated the average piercing number by randomly generating $N = 10{,}000$ fixed-length circular societies. We compare the simulation averages with the conjectured expected values, computed from \Cref{expValEq}.}
    \label{table:expectedvalues}
\end{table}

Returning to the scenario given in the introduction, suppose that a 25-member committee requires all members to attend a training session. They ask each member to select a three-hour interval during which they are available. (Given a 24-hour clock, this yields $p = \frac{1}{8}$).
\Cref{pnumless} tells us that as a worst case, the committee would need to schedule eight training sessions at different times to ensure that all members can attend one of the sessions. 
Assuming availability intervals are uniformly distributed around the circle, the average piercing number is about 6, so in an average case, the committee would only need to schedule 6 sessions in order to satisfy each member's preference. One the other hand, the uniform distribution is itself a sort of worst-case, as it spreads intervals uniformly around the clock. Realistically, committee members'  availabilities are likely to be clustered around certain times of the day; hence, a uniform distribution may not be the best model for this situation.  Consideration of other distributions is an interesting area for future work.

We invite  readers to  explore our simulations of piercing probabilities and average values using our circular society visualization and simulation tool, which is available at \url{https://github.com/tiasondjaja/circular_societies}.


\bibliographystyle{amsplain}


\end{document}